\def\@maketitle{%
  \newpage
  \null
  \let \footnote \thanks
    {\normalfont\sffamily\bfseries\Large\noindent\@title \par}%
    \vskip 1em%
    {\normalfont\sffamily\large
        \noindent
        \@author
        \par}
  \par
  \vskip 4em}
\def\@seccntformat#1{\csname the#1\endcsname{.\ }}
\renewcommand\section{\@startsection {section}{1}{\z@}%
                                   {-3.0ex \@plus -1ex \@minus -.2ex}%
                                   {1.5ex \@plus.2ex}%
                                   {\normalfont\large\bfseries}}
\renewcommand\subsection{\@startsection{subsection}{2}{\z@}%
                                     {-2.75ex\@plus -1ex \@minus -.2ex}%
                                     {1.5ex \@plus .2ex}%
                                   {\normalfont\normalsize\bfseries}}
\def\abstract{\topsep=0pt\partopsep=0pt\parsep=0pt\itemsep=0pt\relax
\trivlist\item[\hskip\labelsep
{\bfseries\abstractname}.]\if!\abstractname!\hskip-\labelsep\fi}
\theoremstyle{plain}
\newtheorem{theorem}[equation]{Theorem}
\newtheorem{corollary}[equation]{Corollary}
\newtheorem{lemma}[equation]{Lemma}
\theoremstyle{definition}
\newtheorem{definition}[equation]{Definition}
\newtheorem{remark}[equation]{Remark}
\renewcommand{\deg}{\mathrm{deg}}
\renewcommand\det{{\rm det\,}}
\def\d/{/\mspace{-6.0mu}/}
\begin{document}  

\title{Low-Order Geometric Actions with Fields a Metric and a Matter Field of Arbitrary Rank}

\author{Daniel Leeco Stern}

\maketitle
A Thesis Submitted to the Department of Mathematics for Honors

Duke University, Durham, NC 2014

\setcounter{section}{0}

\newpage

\emph{Acknowledgements:} To my mentor, Hugh Bray, a hearty thank you for suggesting this problem, and for the abundant guidance and support you've provided throughout this and many other research endeavors. Thanks also to Amir Aazami and Carla Cederbaum for many helpful conversations in the early stages of the project, and thanks to Dave Kraines and the PRUV program for their support in Summer 2012.

\hspace{6mm} Finally, thank you to my parents, my sister, and my wonderful friends, whose constant support and encouragement has time and again proved one of the most valuable items in my mathematical toolkit.

\newpage

\begin{abstract}
  We classify invariant Lagrangians of the form $L(g_{ij},g_{ij,k},g_{ij,kl},D_I,D_{I,j})$ depending at most quadratically on the variables $g_{ij,k},g_{ij,kl}$ and $D_I,D_{I,j}$, where $g$ is a Lorentz metric and $D$ is a tensor field of arbitrary rank on some smooth manifold. As a corollary, we prove a conjecture of Bray's regarding the classification of certain variational principles with variables a Lorentz metric and an affine connection. 
\end{abstract}

\newpage

\section{Introduction}\label{sec:introduction}

\hspace{6mm} In classical General Relativity, spacetime is described by a Lorentz manifold $(M^4,g)$ satisfying the Einstein equation
\begin{equation}\label{einsteineqn}
G=8\pi T,
\end{equation} where $G$ is the Einstein curvature tensor $G=Ric_g-\frac{1}{2}R_gg$ and $T$ is the stress-energy tensor of the matter fields present.\footnote{By $Ric_g,$ I of course mean the Ricci curvature $$Ric_{ij}=R^k_{ijk}=dx^k(\nabla_{\partial_k}\nabla_{\partial_j}\partial_i-\nabla_{\partial_j}\nabla_{\partial_k}\partial_i),$$ and by $R_g$ its contraction with $g$, i.e. the scalar curvature.} While the tensor formula \eqref{einsteineqn} provides one realization of the qualitative principle at the center of GR--that matter curves spacetime, it's not clear, at first glance, why this is a more natural choice than, say, $Ric=8\pi T$, or other formulas relating curvature to matter density. The key reason $G$ gives the most natural representative of spacetime curvature for these purposes lies in the standard symmetries of the curvature operator: by the second Bianchi identity, the divergence of $G$ automatically vanishes in every spacetime--a statement which, coupled with \eqref{einsteineqn}, we can view as a conservation principle.

\hspace{6mm} While the fact that $div (G)=0$ is an immediate consequence of the second Bianchi identity, it also follows from a fundamental observation made by Hilbert in the early days of GR: the Einstein tensor $G$ is the Euler-Lagrange tensor associated with the action 
\begin{equation}\label{ehaction}
E(g)=\int_MR_gdVol_g;
\end{equation}
that is, $G$ is the unique tensor field satisfying
\begin{equation}\label{eldef}
\frac{d}{dt}\int_KR_{g_t}dVol_{g_t}|_{t=0}=-\int_K\langle G,h\rangle_gdVol_g
\end{equation}
for all varations $g_t$ of $g$ supported in a compact set $K\subset M$, where $h=\frac{\partial g_t}{\partial t}|_{t=0}$ \cite{LR}.\footnote{Strictly speaking, the total integral $E(g)$ as written is generally not defined in this context, since our spacetimes are usually taken to be noncompact. Instead, we can think of $E(g)$ as representing the family of (well-defined) functionals given by integrating the scalar curvature over compact subsets of spacetime. Similar abuses of notation will occur frequently throughout this discussion.} In general, for any action of the form
$$F(g)=\int_Mf_gdVol_g$$
where $f_g$ has the coordinate expression $f_g=|\det (g_{ij})|^{-1/2}L(g_{ij},g_{ij,k},g_{ij,kl})$, it's not difficult to see that the Euler-Lagrange tensor (given in each chart $(x^i)$ by $E^{ij}=|\det(g_{ij})|^{-1/2}(-\frac{\partial L}{\partial g_{ij}}+\frac{\partial}{\partial x^k}\frac{\partial L}{\partial g_{ij,k}}-\frac{\partial^2}{\partial x^k\partial x^l}\frac{\partial L}{\partial g_{ij,kl}})$) is divergence-free \cite{LR}. One key advantage of Hilbert's formulation of GR is that it provides a natural way of coupling a matter field's equations of motion with the Einstein equation governing the geometry of the spacetime: if a matter field is described by some ``potential" $(r,s)$-tensor field $A \in \mathscr{T}^r_s(M),$ we consider actions of the form
$$F(g,A)=\int_M(R_g+f_{g,A})dVol_g,$$
and call a pair $(g,A)$ physical if and only if it is a critical point of $F$ with respect to compactly supported variations of both $g$ and $A$. Varying in the $g$ direction yields the Einstein equation $G=8\pi T$ (where we define the stress energy tensor $T$ in a manner similar to \eqref{eldef}--up to a factor of $-8\pi$--by replacing $R_{g_t}$ with $f_{g_t,A}$), while varying in the $A$ direction yields the equations of motion of the matter field--e.g., Maxwell's equations, when $A$ is the one-form describing electromagnetic potential \cite{LR}.

\hspace{6mm} Given the success of this Lagrangian formulation of GR, it is natural to ask how the theory changes when we base our theory not on the Einstein-Hilbert action \eqref{ehaction}, but on structurally similar functionals of the metric. If we define ``structurally similar" actions to be those of the form $\int_ML(g_{ij},g_{ij,k},g_{ij,kl})dVol_g,$ where $L(g_{ij},g_{ij,k},g_{ij,kl})$ is a coordinate-invariant function depending (affine-)linearly on the second derivatives of the metric, the answer is largely understood. It was shown around a century ago by Cartan and Weyl that the space of functions $L(g_{ij},g_{ij,k},g_{ij,kl})$ satisfying these conditions is spanned by the scalar curvature $R_g$ and the constant function $1$ \cite{Weyl}. Thus, all functionals structurally similar to \eqref{ehaction} in the above sense have the form
\begin{equation}\label{ehwithconstant}
\int_M(aR_g+b)dVol_g,\text{ \hspace{4mm}}(a,b \in \mathbb{R})
\end{equation}
so that, up to scaling (except in the degenerate case $a=0$), the associated Euler-Lagrange tensor has the form
\begin{equation}\label{ehwithconstant2}
G+\Lambda g
\end{equation}
for some constant $\Lambda$. In recent decades, astrophysicists have observed that a nontrivial cosmological constant--as the parameter $\Lambda$ in \eqref{ehwithconstant2} is now known--can account for the accelerating expansion of the universe, and modifying the classical Einstein-Hilbert action by adding a small constant term has become one of the most popular explanations for the phenomenon known as ``dark energy" \cite{BS}.  

\hspace{6mm} While the classification of geometric objects subject to certain structural constraints is a compelling subject in its own right, the example above illustrates how these kinds of classification problems can be of direct physical import: by exploring all those theories which are, in some sense, ``close to" a standard one, we may discover a modification that resolves a problem in the original theory.

\hspace{6mm} In \cite{Br}, Bray constructs an intriguing model for the gravitational effects of dark matter by considering functionals $F(g,\nabla)$ of a Lorentz metric $g$ and an affine connection $\nabla,$ given in coordinates by a Lagrangian of the form
\begin{equation}\label{hughlag}
L(g_{ij},g_{ij,k},\Gamma_{ijk},\Gamma_{ijk,l})=Quad_{(g_{ij})}(g_{ij,k},\Gamma_{ijk},\Gamma_{ijk,l})
\end{equation}
--that is, degree-2 polynomials in the variables $(g_{ij,k},\Gamma_{ijk},\Gamma_{ijk,l})$ with coefficients in the smooth functions of the metric components $(g_{ij})$.
Specifically, he shows that the subset of these which determine a variational principle equivalent to one given by an action of the form
\begin{equation}\label{hughlag2}
\int_M(aR_g+b+c|d\gamma|_g^2+Q_g(D))dVol_g
\end{equation}
predict a number of cosmological phenomena often attributed to dark matter--most notably, the spiral patterns found in many disk galaxies \cite{Br}.\footnote{Of course, the Lagrangians of the form \eqref{hughlag} do not, in fact, include the scalar curvature $R_g,$ as the scalar curvature depends nontrivially on second derivatives of the metric; however, $R_g$ differs from a Lagrangian of the given form in each coordinate chart only by a divergence term, and therefore (by the divergence theorem) determines an equivalent variational principle, with the caveat that we only consider variations supported in coordinate neighborhoods.} (Here, following Bray's notation, $D$ is the difference tensor 
$$D_{ijk}:=\Gamma_{ijk}-\frac{1}{2}(g_{ik,j}+g_{jk,i}-g_{ij,k}),$$ $\gamma$ is the fully-antisymmetric part of $D$, and $Q_g(D)$ is a coordinate-invariant function given in coordinates by a quadratic polynomial in $(D_{ijk})$ with coefficients in the smooth functions of $(g_{ij})$.) Bray then conjectures that all variational principles determined by Lagrangians of the form \eqref{hughlag} are in fact equivalent to one of those given by the actions \eqref{hughlag2}, so that all actions with a polynomial structure similar to that of \eqref{hughlag2} will yield the same dark matter model.

\hspace{6mm} By replacing the assumption that $L$ take the form \eqref{hughlag} with the requirement that $L$ take the more general form\footnote{The reason for allowing the more general structure (\ref{danhyp}) is that it allows us to pass back and forth between the connection and difference tensor terms without altering the structure of the Lagrangian.}
\begin{equation}\label{danhyp}
L(g_{ij},g_{ij,k},g_{ij,kl},\Gamma_{ijk},\Gamma_{ijk,l})=Quad_{(g_{ij})}(g_{ij,k},g_{ij,kl},\Gamma_{ijk},\Gamma_{ijk,l})
\end{equation}
and satisfy a coordinate invariance condition, we answer this question in the affirmative, and establish that, more generally,

\begin{theorem}\emph{(Main Theorem--Paraphrase of Theorem \ref{myscalarthm})}\label{mainthmpara} Any geometric variational principle with fields a metric and a tensor field of type $(0,r)$ given by a coordinate-invariant Lagrangian of the form 
\begin{equation}\label{genlagrange}
L(g_{ij},g_{ij,k},g_{ij,kl},D_I,D_{I,j})=Quad_{(g_{ij})}(g_{ij,k},g_{ij,kl},D_I,D_{I,j})
\end{equation}
is equivalent to one given by a Lagrangian of the form
$$a+bR_g+c|d\gamma|_g^2+T_g(D)+Q_g(D),$$
where $Q_g(D)=\mu^{IJ}(g_{ab})D_ID_J$ and $T_g(D)=\eta^I(g_{ab})D_I$ are coordinate-invariant quadratic and linear functions, respectively, of $(D_I)$, with coefficients in the smooth functions of the metric components $(g_{ij}).$
\end{theorem}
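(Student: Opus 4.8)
The plan is to translate the hypotheses into classical invariant theory and then let the joint-degree bound do the heavy lifting. First I would use the fact that coordinate invariance makes $L$ a \emph{natural} scalar built from the $2$-jet of $g$ and the $1$-jet of $D$: by the infinitesimal invariance identities of Lovelock--Rund \cite{LR} (in the spirit of \cite{Weyl}), refined to carry along the tensor field $D$, any such $L$ is a sum of complete metric contractions of the tensorial quantities $R_{ijkl}$, $D_I$ and $\nabla_j D_I$, with coefficients that are smooth functions of $(g_{ij})$ only. Because $L$ must be a $Quad_{(g_{ij})}$, i.e. of joint polynomial degree $\le 2$ in $(g_{ij,k},g_{ij,kl},D_I,D_{I,j})$, I would next count jet-degrees. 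The crucial observation is that curvature and the covariant derivative are \emph{inhomogeneous} in these variables: $R_{ijkl}$ has a piece linear in $g_{ij,kl}$ and a piece quadratic in $g_{ij,k}$, while $\nabla_j D_I=D_{I,j}-\Gamma^{\,k}_{\,\cdot j}D_{\,\cdot}$ has a jet-degree-$1$ piece and a Christoffel piece of jet-degree $2$. Tracking the maximal jet-degree of a monomial in $R$, $D$, $\nabla D$ then forces curvature to occur at most once and \emph{alone}, $D$ to occur at most quadratically, and rules out the full $|\nabla D|^2$.

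Granting this reduction, I would dispatch the terms group by group. The orientation-invariance built into coordinate invariance (the density $dVol_g$ uses $|\det g_{ij}|^{1/2}$) forbids any dependence on the volume form $\varepsilon$, so all parity-odd contractions---including Chern--Simons-type terms such as $\gamma\wedge d\gamma$---are excluded at the outset. The constant and the purely algebraic contractions in $D$ yield exactly $a$, $T_g(D)=\eta^I(g_{ab})D_I$ and $Q_g(D)=\mu^{IJ}(g_{ab})D_ID_J$. Every term linear in $\nabla_j D_I$, and every admissible cross term $D_I\nabla_j D_J$, is either of excluded jet-degree, parity-odd, or (using $\nabla g=0$) an exact divergence, so none contributes to the variational principle. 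For the metric second derivatives I would invoke the Cartan--Weyl classification \cite{Weyl}: the unique invariant linear in $g_{ij,kl}$ is a multiple of $R_g$, which differs from a first-order Lagrangian by a divergence, whereas any invariant genuinely quadratic in $g_{ij,kl}$ is quadratic in curvature and hence of joint jet-degree $4$; such terms, along with all curvature$\times$matter cross terms, are excluded by the degree bound. This already isolates the summands $a$, $bR_g$, $T_g(D)$ and $Q_g(D)$.

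The remaining case---and the one I expect to be genuinely hard---is the part of $L$ quadratic in the first derivatives $D_{I,j}$. Its coefficients are forced to be jet-degree $0$, so this part is $C^{Ij,Kl}(g_{ab})\,D_{I,j}D_{K,l}$, and the difficulty is that $D_{I,j}D_{K,l}$ is not tensorial: replacing it by the invariant $C(g)\,\nabla_jD_I\,\nabla_lD_K$ reintroduces Christoffel terms of jet-degree $3$ and $4$, which the degree bound forbids. Hence the admissible contractions are exactly those whose Christoffel contributions cancel identically. Since the Levi-Civita connection is torsion-free, this cancellation occurs precisely upon full antisymmetrization of the derivative index against all indices of $D$, that is, in the exterior derivative $d\gamma$ of the totally antisymmetric part $\gamma$ of $D$. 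I would therefore aim to prove that the space of $O(1,n-1)$-invariant, parity-even, jet-degree-$2$ quadratic forms in $D_{I,j}$ is one-dimensional, spanned by $|d\gamma|_g^2$: decompose $\nabla D$ into irreducible components under the orthogonal group and the symmetry type of $D$, and check that $|\nabla D|^2$, the codifferential term $|\delta\gamma|^2$, and every mixed-symmetry contraction retain uncancelled Christoffel pieces (equivalently, acquire jet-degree $>2$), leaving the totally antisymmetric component as the unique $\Gamma$-free survivor. The main obstacle is carrying out this last classification \emph{uniformly} in the rank $r$ and symmetry type of $D$; the combinatorics of which complete contractions of $\nabla D\otimes\nabla D$ are Christoffel-free is the technical heart of the argument, and assembling the surviving pieces gives precisely $a+bR_g+c|d\gamma|_g^2+T_g(D)+Q_g(D)$.
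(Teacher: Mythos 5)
Your route is genuinely different from the paper's: you reduce to classical invariant theory (a replacement theorem plus Weyl's first fundamental theorem for $O(1,n-1)$) and then count jet-degrees, whereas the paper never invokes naturality machinery and instead works directly on the jet variables with explicit coordinate changes. The strategy could probably be completed, but as written it has a genuine gap at its load-bearing step: the claim that tracking maximal jet-degree ``forces curvature to occur at most once and alone, $D$ to occur at most quadratically, and rules out the full $|\nabla D|^2$.'' Once you write $L$ as a linear combination of complete contractions of monomials in $R_{ijkl}$, $D_I$, $\nabla_j D_I$, the hypothesis only constrains the \emph{sum} to have joint degree $\le 2$ in $(g_{ij,k},g_{ij,kl},D_I,D_{I,j})$; each individual ``bad'' monomial (contractions of $R\otimes R$, $R\otimes D$, $R\otimes \nabla D$, $\nabla D\otimes\nabla D$) has pieces of degree $3$ and $4$, and you must rule out the possibility that these high-degree pieces cancel \emph{across} different contractions, leaving behind a nonzero degree-$\le 2$ invariant missing from your list. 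Concretely, to exclude quadratic curvature terms you need to know that no nontrivial combination $aR_g^2+b|Ric|^2+c|Riem|^2$ has its cubic and quartic pieces identically zero as polynomials in $(g_{ij,k},g_{ij,kl})$. That non-cancellation statement is not bookkeeping: it is essentially equivalent to what the paper's proof actually establishes, namely that $\frac{\partial^2 L}{\partial g_{ij,kl}\,\partial g_{qr,st}}=0$ and that $\frac{\partial^2 L}{\partial D_{I,j}\,\partial D_{K,l}}$ is fully antisymmetric, derived there by hand from coordinate changes of the form $x^i=\tilde{x}^i+\frac{1}{2}b^i_{jk}\tilde{x}^j\tilde{x}^k+\frac{1}{6}\eta_{ia}c^a_{jkl}\tilde{x}^j\tilde{x}^k\tilde{x}^l$. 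Asserting it by ``tracking the maximal jet-degree'' assumes exactly what has to be proven.

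Two further points. First, your opening reduction---that coordinate invariance makes $L$ a sum of complete metric contractions of $R$, $D$, $\nabla D$---is itself a nontrivial theorem (a Thomas-style replacement theorem combined with the first fundamental theorem for the indefinite orthogonal group, extended to carry an auxiliary tensor field of arbitrary rank); it is not available in the cited sources in this generality and is comparable in depth to the result being proven, so it cannot serve as a black box. Second, for the step you rightly call the technical heart, your criterion ``Christoffel-free $\iff$ derivative index fully antisymmetrized against the indices of $D$'' is the correct mechanism---it is precisely the paper's statement that $\eta^{IjKl}=\frac{\partial^2 L}{\partial D_{I,j}\partial D_{K,l}}$ is antisymmetric in its last $m+1$ indices, after which $O(1,n-1)$-invariance pins the quadratic form down to $c|d\gamma|_g^2$---but your proposal only states the intended classification rather than proving it. The proof requires exactly the computation the paper carries out: quadratic coordinate changes acting on $D_{K,l}$ to force the antisymmetry, then Lorentz-invariance arguments on the constant coefficient tensor. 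So the proposal identifies the right landmarks but leaves unproven the two statements that constitute the actual mathematical content of the theorem.
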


In particular, we deduce that, for all varational principles of the form (\ref{genlagrange}), the only part of $D$ whose dynamics are controlled by the resulting equations of motion is the fully antisymmetric part $\gamma$.

\medskip

\section{Review of Classical Results}\label{sec:classic}

\hspace{6mm} In order to introduce many of the techniques that we'll employ in the proof of the main theorem, we review here some classical results concerning the classification of certain families $\{f_g\}$ of smooth functions on Lorentz manifolds $(M^n,g)$, where $f_g$ is described in each coordinate chart by a function $L(g_{ij},g_{ij,k},g_{ij,kl})$ of the metric and its derivatives. Our proofs of these theorems follow the sketches given by Weyl in \cite{Weyl}, but fill in a number of details often left out of the literature.

\hspace{6mm} As a matter of convenience, we first briefly recall some standard transformation formulas for the coordinate representations of semi-Riemannian metrics. Let $(x^i)$ and $(\tilde{x}^i)$ be overlapping coordinate charts on a manifold $M$, and let $g$ be a semi-Riemannian metric on $M$, described in each chart by the components $g_{ij}$ and $\tilde{g}_{ij},$ respectively. Then, on the overlap of these charts, we have:
\begin{equation}\label{gijs}
\tilde{g}_{ij}=\frac{\partial x^a}{\partial \tilde{x}^i}\frac{\partial x^b}{\partial \tilde{x}^j}g_{ab},
\end{equation}
\begin{equation}\label{gijks}
\tilde{g}_{ij,k}=\frac{\partial x^a}{\partial\tilde{x}^i}\frac{\partial x^b}{\partial \tilde{x}^j}\frac{\partial x^c}{\partial \tilde{x}^k}g_{ab,c}+\left(\frac{\partial^2 x^a}{\partial \tilde{x}^i\partial\tilde{x}^k}\frac{\partial x^b}{\partial \tilde{x}^j}+\frac{\partial x^a}{\partial \tilde{x}^i}\frac{\partial^2x^b}{\partial \tilde{x}^j\partial\tilde{x}^k}\right)g_{ab},
\end{equation}
and
\begin{equation}\label{gijkls}
\begin{split}
\tilde{g}_{ij,kl}= & \frac{\partial x^a}{\partial\tilde{x}^i}\frac{\partial x^b}{\partial \tilde{x}^j}\frac{\partial x^c}{\partial \tilde{x}^k}\frac{\partial x^d}{\partial\tilde{x}^l}g_{ab,cd}\\
&+\left(\frac{\partial^2 x^a}{\partial \tilde{x}^i\partial\tilde{x}^l}\frac{\partial x^b}{\partial \tilde{x}^j}\frac{\partial x^c}{\partial \tilde{x}^k}+\frac{\partial x^a}{\partial \tilde{x}^i}\frac{\partial^2 x^b}{\partial \tilde{x}^j\partial\tilde{x}^l}\frac{\partial x^c}{\partial\tilde{x}^k}+\frac{\partial x^a}{\partial \tilde{x}^i}\frac{\partial x^b}{\partial \tilde{x}^j}\frac{\partial^2x^c}{\partial \tilde{x}^k\partial \tilde{x}^l}\right)g_{ab,c}\\
&+\frac{\partial x^c}{\partial \tilde{x}^l}\left(\frac{\partial^2 x^a}{\partial \tilde{x}^i\partial\tilde{x}^k}\frac{\partial x^b}{\partial \tilde{x}^j}+\frac{\partial x^a}{\partial \tilde{x}^i}\frac{\partial^2x^b}{\partial \tilde{x}^j\partial\tilde{x}^k}\right)g_{ab,c}\\
&+\left(\frac{\partial^2 x^a}{\partial \tilde{x}^i\partial\tilde{x}^k}\frac{\partial^2x^b}{\partial \tilde{x}^j\partial \tilde{x}^l}+\frac{\partial^2 x^a}{\partial \tilde{x}^i\partial\tilde{x}^l}\frac{\partial^2 x^b}{\partial \tilde{x}^j\partial \tilde{x}^k}\right)g_{ab}\\
&+\left(\frac{\partial^3 x^a}{\partial \tilde{x}^i\partial\tilde{x}^k\partial\tilde{x}^l}\frac{\partial x^b}{\partial \tilde{x}^j}+\frac{\partial x^a}{\partial \tilde{x}^i}\frac{\partial^3x^b}{\partial \tilde{x}^j\partial \tilde{x}^k\partial\tilde{x}^l}\right)g_{ab}.
\end{split}
\end{equation}

\hspace{6mm} Before we can state the results in question, we need to specify the appropriate domain for these functions $L$. Those readers most familiar with the calculus of variations will recognize that the natural domain for the Lagrangians of interest is the 2-jet bundle $J^2(E)$ associated with the bundle $Sym^2(T^*M)\supset E \to M$ of nondegenerate symmetric $(0,2)$-tensors on an underlying manifold $M$, so that the integrand in the associated action is obtained from $L$ simply by composition with sections of $J^2(E)$. However, we avoid this treatment here for two reasons:

a. Taking the domain of $L$ to be $J^2(E)$ presupposes the coordinate invariance of $L$. For our purposes, it will be more useful to introduce coordinate invariance as an explicit algebraic condition, rather than a property encoded in the domain.

b. We wish to keep the discussion sufficiently elementary that any reader familiar with the most basic definitions of semi-Riemannian geometry will be able to follow. Though jet bundles play a fundamental role in the modern study of the calculus of variations,\footnote{And for a good introduction to jet bundles, we refer the interested reader to \cite{S}.} introducing them here would be a digression that would likely serve more to confuse than to clarify.

Furthermore, since the Lagrangians of interest do not depend on the base space with respect to local trivializations of $J^2(E\to M)$, what follows is in essence a discussion of how best to coordinatize the space $J^2(E \to \mathbb{R}^n)_0$ of $2$-jets at the origin of $\mathbb{R}^n$.

\hspace{6mm} In an obvious way, we can view $(g_{ij}(p),g_{ij,k}(p),g_{ij,kl}(p))$ as an element of $V_n:=(\mathbb{R}^n)^{\otimes 2}\times(\mathbb{R}^n)^{\otimes 3}\times(\mathbb{R}^n)^{\otimes 4}\cong\mathbb{R}^{n^2}\times\mathbb{R}^{n^3}\times\mathbb{R}^{n^4}$ (making the identification $(w_{ij},w_{ijk},w_{ijkl})=(w_{ij}e_i\otimes e_j, w_{ijk}e_i\otimes e_j\otimes e_k,w_{ijkl}e_i\otimes e_j\otimes e_k\otimes e_l)$, where all indices run from $0$ to $n-1$ and the standard summation convention is in effect). Defining
\begin{equation}\label{wdef}
W_n:=\{(w_{ij},w_{ijk},w_{ijkl}) \in V_n \mid \det(w_{ij}) \neq 0\},
\end{equation}
we see that $(g_{ij}(p),g_{ij,k}(p),g_{ij,kl}(p)) \in W_n$ as well, by the nondegeneracy of the metric tensor. It's important that we restrict our attention to functions on $W_n$ rather than $V_n$, as most of the functions $L(w_{ij},w_{ijk},w_{ijkl})$ of interest--in particular, the function $L: W_n \to \mathbb{R}$ such that $L(g_{ij},g_{ij,k},g_{ij,kl})=R_g$--employ the inverse of the matrix $(w_{ij})$ in their construction. We'll use $W_n$ as the domain for our functions $L$, largely for reasons of notational convenience. (Because of the symmetries and signature of the terms $(g_{ij})$, $(g_{ij,k})$, and $(g_{ij,kl})$, we could of course restrict our attention to smaller domains, but this would be an unnecessary complication, and would have no effect on the following results.) 

\hspace{6mm} With these definitions in place, we can now state the classic result of Cartan and Weyl regarding the uniqueness of scalar curvature:

\begin{theorem}\label{thm01}\emph{(\cite{Weyl})}
 Let $L:W_n\to\mathbb{R}$ be a smooth function of the form   
$$L(w_{ij},w_{ijk},w_{ijkl})=\alpha^{ijkl}(w_{ab},w_{abc})w_{ijkl}+\beta(w_{ab},w_{abc})$$ 
such that, for  every Lorentz n-manifold $(M^n, g)$, there exists a (globally-defined) function $f_g \in C^{\infty}(M)$ satisfying \begin{equation}\label{thm01invariance}
L(g_{ij},g_{ij,k},g_{ij,kl})=f_g
\end{equation} in every coordinate chart on $M$. Then there are constants $a,b \in \mathbb{R}$ such that $f_g=aR_g+b$ for every Lorentz manifold $(M^n,g)$.
\end{theorem}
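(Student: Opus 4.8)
# Proof Proposal

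The plan is to exploit the coordinate invariance condition \eqref{thm01invariance} by examining how $L$ must transform under the change-of-coordinate formulas \eqref{gijs}--\eqref{gijkls}, and to extract strong pointwise constraints by working at a single point $p$ in specially adapted coordinates. The key observation is that at any point, by choosing normal-type coordinates, we can arrange $(g_{ij}(p), g_{ij,k}(p)) = (\eta_{ij}, 0)$ where $\eta$ is the Minkowski metric, while $g_{ij,kl}(p)$ remains free (subject only to the symmetry $g_{ij,kl} = g_{ji,kl} = g_{ij,lk}$). The residual freedom in the third-order Taylor coefficients of the coordinate change, which affects $\tilde g_{ij,kl}(p)$ additively through the last term of \eqref{gijkls}, is precisely what will force the dependence of $L$ on $w_{ijkl}$ to be affine-linear in a highly constrained way.

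First I would use the invariance to show that the coefficient function $\alpha^{ijkl}(w_{ab}, w_{abc})$ is essentially independent of $w_{abc}$ and that its dependence on $w_{ab}$ is algebraic: transforming $L$ and comparing the terms multiplying $\tilde g_{ij,kl}$ on both sides, the tensorial transformation law for the top-order part should force $\alpha^{ijkl}$ to transform as the components of a $(4,0)$-tensor built from the inverse metric, hence to be a constant-coefficient combination of products $g^{ij}g^{kl}$ and their index permutations. Next, since $g_{ij,kl}$ at $p$ can be prescribed with complete freedom in its symmetric slots, and since the third-derivative coordinate-change parameters $\partial^3 x^a/\partial\tilde x^i\partial\tilde x^k\partial\tilde x^l$ act by adding an arbitrary totally-symmetric-in-$(jkl)$ tensor to $\tilde g_{ij,kl}$, invariance demands that $L$ be unchanged under such shifts. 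Contracting the allowed tensor structures for $\alpha^{ijkl}$ against this symmetric shift tensor and setting the result to zero pins down the admissible combination: the surviving contraction is forced to be proportional to the one appearing in the scalar curvature, namely $g^{ij}g^{kl} - g^{ik}g^{jl}$ (up to the relevant symmetrizations), which is exactly the combination whose contraction with $g_{ij,kl}$ recovers the second-derivative part of $R_g$.

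Having matched the top-order part of $L$ with $a R_g$ for some function $a$, I would subtract $a R_g$ and analyze the remainder $\beta(w_{ab}, w_{abc})$, now a scalar function of the metric and its first derivatives alone. Evaluating in normal coordinates where $g_{ij,k}(p)=0$, invariance of a genuine scalar forces $\beta$ restricted to $\{w_{abc}=0\}$ to depend only on $(w_{ab})$ through $\mathrm{O}(n-1,1)$-invariants; the full dependence on $w_{abc}$ must then vanish by a parallel argument using the second-derivative coordinate-change term in \eqref{gijks}, which lets us set $g_{ij,k}(p)$ to zero from any starting value. This reduces $\beta$ to a pointwise function of $g_{ij}$ invariant under the full Lorentz group acting on the tangent space, and the only such smooth invariants of a single symmetric nondegenerate form of fixed signature are constants; hence $\beta \equiv b$. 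Finally I would verify that the constant $a$ is genuinely constant (not merely a pointwise scalar): since $R_g$ already accounts for all second-derivative dependence with a fixed tensorial coefficient, any variation of $a$ across points would reintroduce uncontrolled dependence, forcing $a$ constant.

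I expect the main obstacle to be the bookkeeping in the second step: correctly tracking which index symmetrizations the free third-order parameter $\partial^3 x^a/\partial\tilde x^i\partial\tilde x^k\partial\tilde x^l$ induces on $\tilde g_{ij,kl}(p)$, and verifying that the resulting linear constraints on $\alpha^{ijkl}$ isolate \emph{exactly} the scalar-curvature combination rather than leaving an extra free parameter. This is the crux where the second Bianchi-type structure enters algebraically, and where Weyl's sketch leaves the details implicit; making the invariance under these higher-order coordinate changes fully rigorous, rather than appealing to tensoriality heuristically, is the technical heart of the argument.
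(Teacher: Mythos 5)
Your proposal is correct, and its skeleton matches the paper's proof: establish that the top-order coefficients $\alpha^{ijkl}$, evaluated on metric data, transform as the components of a $(4,0)$-tensor (the paper's Lemma \ref{weyllemma}); pass to Lorentz normal coordinates at $p$, so that only the constants $a^{ijkl}=\alpha^{ijkl}(\eta_{ab},0)$ and $b=\beta(\eta_{ab},0)$ matter; exploit the residual cubic coordinate freedom $x^i=\tilde{x}^i+\frac{1}{6}\eta_{ia}c^a_{jkl}\tilde{x}^j\tilde{x}^k\tilde{x}^l$ to obtain the linear constraint $a^{ijkl}c^i_{jkl}=0$; and dispose of the zeroth-order remainder by evaluating in normal coordinates. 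Where you genuinely diverge is the classification step. The paper pins down $a^{ijkl}$ entirely by hand: it applies the tensoriality relation \eqref{lorentzinvar1} to explicit sign flips, index permutations, and a boost, derives \eqref{niceweylsym}, \eqref{weylcombinatorics00}, and \eqref{weylcombinatorics1}, and only then compares with \eqref{scalarcomp}. You instead invoke classical invariant theory: every $O(1,n-1)$-invariant rank-4 tensor is a constant-coefficient combination of $\eta^{ij}\eta^{kl}$, $\eta^{ik}\eta^{jl}$, $\eta^{il}\eta^{jk}$, after which the index symmetries force the last two coefficients to agree ($B=C$) and the cubic-shift constraint forces $A=-2B$, leaving exactly $a^{ijkl}g_{ij,kl}=2B\,\epsilon_i\epsilon_j(g_{ij,ij}-g_{ii,jj})=2B\,R_g(p)$ — this computation does close correctly. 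Your route is shorter, but it outsources the combinatorial core (which constitutes most of the paper's proof) to the first fundamental theorem of invariant theory for the pseudo-orthogonal group; in indefinite signature that theorem itself needs justification (e.g.\ via Zariski density of $O(1,n-1)$ in the complex orthogonal group), and one must use the \emph{full} orthogonal group — which your tensoriality does supply, since orientation-reversing coordinate changes are allowed — to exclude a Levi-Civita term when $n=4$. The paper's hands-on argument is longer but self-contained, and its explicit transformations are reused nearly verbatim in Theorem \ref{weyltensorthm} and Theorem \ref{myscalarthm}. Two phrasing caveats: your conclusion ``$\beta\equiv b$'' is true only of the remainder $L-aR_g$ evaluated on arguments realizable by metrics, not of $\beta$ as a function (the $\beta$ of the invariant Lagrangian $R_g$ itself contains nonconstant $\Gamma\Gamma$ terms, which merely vanish at the center of normal coordinates); and your final worry about $a$ being ``genuinely constant'' is vacuous in this framework, since $a$ arises as a coefficient of the fixed array $\alpha^{ijkl}(\eta_{ab},0)$ — a number, not a function of the point — exactly as in the paper.
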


\begin{remark}\label{rk1} 
Since we're only interested in the restriction of $L$ to the components of symmetric tensor fields and their derivatives, we can assume without loss of generality (by considering instead $L\circ \varphi$, where $\varphi(w_{ij},w_{ijk},w_{ijkl})=\frac{1}{2}(w_{ij}+w_{ji},w_{ijk}+w_{jik},\frac{1}{2}(w_{ijkl}+w_{jikl}+w_{ijlk}+w_{jilk}))$) that $L$ satisfies the symmetries 

\begin{equation}\label{symmetries1}
\frac{\partial L}{\partial w_{ij}}=\frac{\partial L}{\partial w_{ji}},\text{ }
\frac{\partial L}{\partial w_{ijk}}=\frac{\partial L}{\partial w_{jik}},\text{ and }
\frac{\partial L}{\partial w_{ijkl}}=\frac{\partial L}{\partial w_{jikl}}=\frac{\partial L}{\partial w_{ijlk}}.
\end{equation}
\end{remark}

\hspace{6mm} An important first step in the proof will be to establish coordinate invariance (in the tensorial sense) of the coefficients $\alpha^{ijkl}=\frac{\partial L}{\partial w_{ijkl}}$; though the proof of this lemma is not particularly subtle, we carry it out in full detail below, as we will employ analogous statements without proof in later sections. 

\begin{lemma}\label{weyllemma} Let $L:W_n \to \mathbb{R}$ be a smooth function satisfying the invariance hypothesis \eqref{thm01invariance} of Theorem \ref{thm01} and the symmetries \eqref{symmetries1}. Then, for every Lorentz manifold $(M^n, g)$, the relation
\begin{equation}\label{scalardiffinvar1}
\frac{\partial L}{\partial w_{ijkl}}(g_{ij},g_{ij,k},g_{ij,kl})=\frac{\partial x^i}{\partial \tilde{x}^a}\frac{\partial x^j}{\partial \tilde{x}^b}\frac{\partial x^k}{\partial \tilde{x}^c}\frac{\partial x^l}{\partial \tilde{x}^d}\frac{\partial L}{\partial w_{abcd}}(\tilde{g}_{ij},\tilde{g}_{ij,k},\tilde{g}_{ij,kl}).
\end{equation}
holds on the overlap of any two coordinate charts $(x^i)$ and $(\tilde{x}^i)$--that is, the derivatives $\frac{\partial L}{\partial w_{ijkl}}(g_{ij},g_{ij,k},g_{ij,kl})$ (evaluated at the metric components and their derivatives) form the components of a $(4,0)$-tensor field.
\end{lemma}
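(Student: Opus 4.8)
The plan is to exploit the affine-linear dependence of $L$ on the second-derivative variables $w_{ijkl}$ together with the invariance hypothesis \eqref{thm01invariance}, extracting the transformation law by differentiating with respect to the metric's second-derivative data at a point. Write $L=\alpha^{ijkl}(w_{ab},w_{abc})\,w_{ijkl}+\beta(w_{ab},w_{abc})$, and observe first that $\frac{\partial L}{\partial w_{ijkl}}=\alpha^{ijkl}$ depends only on $(w_{ab},w_{abc})$; thus the object to be controlled is a function of the $0$- and $1$-jet of the metric alone. Fix a point $p$ in the overlap of the charts $(x^i)$ and $(\tilde x^i)$ and regard the chart transition---hence its jets $\frac{\partial x^a}{\partial\tilde x^i}$, $\frac{\partial^2 x^a}{\partial\tilde x^i\partial\tilde x^k}$, $\frac{\partial^3 x^a}{\partial\tilde x^i\partial\tilde x^k\partial\tilde x^l}$ at $p$---as fixed. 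Since \eqref{thm01invariance} holds on every Lorentz manifold and $f_g$ is globally defined, evaluating $f_g$ in the two charts gives the pointwise identity $L(g_{ij},g_{ij,k},g_{ij,kl})=L(\tilde g_{ij},\tilde g_{ij,k},\tilde g_{ij,kl})$ at $p$; moreover this identity persists as the $2$-jet of $g$ at $p$ ranges over all admissible (nondegenerate, Lorentzian) values, since every such $2$-jet is realized by some actual metric.

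The key observation is that, among the arguments on the right-hand side, only $\tilde g_{ij,kl}$ depends on the second derivatives $g_{pq,rs}$. Indeed, by \eqref{gijs} and \eqref{gijks} the quantities $\tilde g_{ij}$ and $\tilde g_{ij,k}$ are assembled from $g_{ab}$, $g_{ab,c}$ and the transition jet only, while in \eqref{gijkls} the sole term carrying a second derivative of $g$ is the leading one, $\frac{\partial x^a}{\partial\tilde x^i}\frac{\partial x^b}{\partial\tilde x^j}\frac{\partial x^c}{\partial\tilde x^k}\frac{\partial x^d}{\partial\tilde x^l}g_{ab,cd}$. I would therefore differentiate the invariance identity with respect to $g_{pq,rs}$, varying the $2$-jet while holding $g_{ij}$, $g_{ij,k}$ and the transition jet fixed. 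On the left this produces $\alpha^{pqrs}(g_{ij},g_{ij,k})$, because $L$ is exactly affine in $w_{ijkl}$. On the right the chain rule, combined with the observation above, gives $\alpha^{ijkl}(\tilde g\text{-data})\,\frac{\partial\tilde g_{ij,kl}}{\partial g_{pq,rs}}$ with $\frac{\partial\tilde g_{ij,kl}}{\partial g_{pq,rs}}=\frac{\partial x^a}{\partial\tilde x^i}\frac{\partial x^b}{\partial\tilde x^j}\frac{\partial x^c}{\partial\tilde x^k}\frac{\partial x^d}{\partial\tilde x^l}\frac{\partial g_{ab,cd}}{\partial g_{pq,rs}}$; contracting the Kronecker deltas $\frac{\partial g_{ab,cd}}{\partial g_{pq,rs}}$ turns the four factors into $\frac{\partial x^p}{\partial\tilde x^i}\frac{\partial x^q}{\partial\tilde x^j}\frac{\partial x^r}{\partial\tilde x^k}\frac{\partial x^s}{\partial\tilde x^l}$. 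Equating the two sides and relabelling indices yields precisely \eqref{scalardiffinvar1}.

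The single point requiring care is the index bookkeeping forced by the symmetries of the second-derivative data: since the metric $2$-jet may be varied only through configurations symmetric in $(p,q)$ and in $(r,s)$, the derivative $\frac{\partial g_{ab,cd}}{\partial g_{pq,rs}}$ is a symmetrized Kronecker delta rather than a bare product, and the same symmetrizer appears on both sides of the differentiated identity. The symmetries of $\alpha^{ijkl}$ recorded in Remark \ref{rk1}---namely \eqref{symmetries1}, which give symmetry of $\alpha^{ijkl}$ under $i\leftrightarrow j$ and under $k\leftrightarrow l$---let this symmetrizer be absorbed into $\alpha$, so that the normalization factors on the two sides coincide and cancel, leaving the clean tensorial relation. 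I expect this symmetrization bookkeeping, rather than any conceptual difficulty, to be the only real obstacle: the heart of the matter is simply that affine-linearity in the second derivatives isolates $\alpha^{ijkl}$ as the coefficient of the unique term of \eqref{gijkls} carrying $g_{ab,cd}$, whose coefficient is exactly the rank-four Jacobian demanded by \eqref{scalardiffinvar1}.
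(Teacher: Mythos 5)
Your proposal is correct and follows essentially the same route as the paper: both differentiate the pointwise invariance identity with respect to the second-derivative data (using that any symmetric $2$-jet is realized by an actual Lorentz metric), observe that only the leading term of \eqref{gijkls} carries $g_{ab,cd}$, and absorb the resulting symmetrized Kronecker deltas via the symmetries \eqref{symmetries1}. The paper merely packages this computation as a directional derivative of $L\circ\Phi_{\tilde{\xi}}$ along the symmetrized basis directions $v_{ijkl}$, which is the same argument in different notation.
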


\begin{proof} Let $M^n$ be a manifold admitting a Lorentz metric. Fix an arbitrary point $p \in M$ and a coordinate chart $\xi=(x^i)$ defined on a neighborhood of $p$. For every coordinate chart $\tilde{\xi}=(\tilde{x}^i)$ defined on a neighborhood of $p$, define a map $\Phi_{\tilde{\xi}}:W_n\to W_n$ by 
$$\Phi_{\tilde{\xi}}(w_{ij},w_{ijk},w_{ijkl})=(\tilde{w}_{ij},\tilde{w}_{ijk},\tilde{w}_{ijkl}),$$
where $(\tilde{w}_{ij},\tilde{w}_{ij,k},\tilde{w}_{ij,kl})$ is given by equations \eqref{gijs}, \eqref{gijks}, and \eqref{gijkls} (replacing $g$ with $w$ and evaluating the derivatives of $\xi\circ\tilde{\xi}^{-1}$ at $p$), so that, if $T$ is any (0,2)-tensor field on $M$, we have 
$$\Phi_{\tilde{\xi}}(T_{ij}(p),T_{ij,k}(p),T_{ij,kl}(p))=(\tilde{T}_{ij}(p),\tilde{T}_{ij,k}(p),\tilde{T}_{ij,kl}(p)).$$
Then, for any Lorentz metric $g$ on $M$, it follows from the hypotheses of Theorem \ref{thm01} that
\begin{equation}\label{scalarinvar1}
L \circ \Phi_{\tilde{\xi}}(g_{ij}(p),g_{ij,k}(p),g_{ij,kl}(p))=f_g(p)=L(g_{ij}(p),g_{ij,k}(p),g_{ij,kl}(p)).
\end{equation}

Fix some Lorentz metric $g$ on $M$. For any $(c_{ijkl}) \in (\mathbb{R}^n)^{\otimes 4}$ satsifying the symmetries $c_{ijkl}=c_{jikl}=c_{ijlk}$, it's easy to construct another Lorentz metric $h$ on $M$ such that $$(h_{ij}(p),h_{ij,k}(p),h_{ij,kl}(p))=(g_{ij}(p),g_{ij,k}(p),c_{ijkl}),$$ and thus, applying \eqref{scalarinvar1} to the metric $h$, it follows that
$$L\circ \Phi_{\tilde{\xi}}(g_{ij}(p),g_{ij,k}(p),c_{ijkl})=L(g_{ij}(p),g_{ij,k}(p), c_{ijkl})$$
for all such $(c_{ijkl})$. Consequently, setting $e_{ijkl}:=e_i\otimes e_j\otimes e_k\otimes e_l \in (\mathbb{R}^n)^{\otimes 4}$ and defining $v_{ijkl}:=\frac{1}{4}(e_{ijkl}+e_{jikl}+e_{ijlk}+e_{jilk})$, we have
\begin{equation}\label{secordinv}
L \circ \Phi_{\tilde{\xi}}(g_{ij}(p),g_{ij,k}(p),g_{ij,kl}(p)+tv_{ijkl})=L(g_{ij}(p),g_{ij,k}(p),g_{ij,kl}(p)+tv_{ijkl})
\end{equation}
for all $t \in \mathbb{R}$. Differentiating (\ref{secordinv}) in $t$, we obtain (setting $x=(g_{ij}(p),g_{ij,k}(p),g_{ij,kl}(p))$ for convenience)
\begin{eqnarray*}
D_{v_{ijkl}}L(x)&=&D_{v_{ijkl}}(L\circ \Phi_{\tilde{\xi}})(x)\\
&=&\frac{\partial L}{\partial w_{ab}}(\Phi_{\tilde{\xi}}(x))D_{v_{ijkl}}\tilde{w}_{ab}(x)+\frac{\partial L}{\partial w_{abc}}(\Phi_{\tilde{\xi}}(x))D_{v_{ijkl}}\tilde{w}_{abc}(x)\\
&&+\frac{\partial L}{\partial w_{abcd}}(\Phi_{\tilde{\xi}}(x))D_{v_{ijkl}}\tilde{w}_{abcd}(x)\\
&=&\frac{\partial L}{\partial w_{abcd}}(\Phi_{\tilde{\xi}}(x))D_{v_{ijkl}}\tilde{w}_{abcd}(x)\\
&=&\frac{1}{4}\left(\frac{\partial L}{\partial w_{abcd}}(\Phi_{\tilde{\xi}}(x))\frac{\partial \tilde{w}_{abcd}}{\partial w_{ijkl}}(x)+\cdots+\frac{\partial L}{\partial w_{abcd}}(\Phi_{\tilde{\xi}}(x))\frac{\partial \tilde{w}_{abcd}}{\partial w_{jilk}}(x)\right)\\
\text{(by \eqref{symmetries1})}&=&\frac{1}{4}\left(\frac{\partial L}{\partial w_{abcd}}(\Phi_{\tilde{\xi}}(x))\frac{\partial \tilde{w}_{abcd}}{\partial w_{ijkl}}(x)+\cdots+\frac{\partial L}{\partial w_{badc}}(\Phi_{\tilde{\xi}}(x))\frac{\partial \tilde{w}_{abcd}}{\partial w_{jilk}}(x)\right)\\
&=&\frac{1}{4}\left(\frac{\partial L}{\partial w_{abcd}}(\Phi_{\tilde{\xi}}(x))\frac{\partial \tilde{w}_{abcd}}{\partial w_{ijkl}}(x)+\cdots+\frac{\partial L}{\partial w_{abcd}}(\Phi_{\tilde{\xi}}(x))\frac{\partial \tilde{w}_{badc}}{\partial w_{jilk}}(x)\right)\\
\text{(by \eqref{gijkls})}&=&\frac{\partial L}{\partial w_{abcd}}(\Phi_{\tilde{\xi}}(x))\frac{\partial x^i}{\partial \tilde{x}^a}(p)\frac{\partial x^j}{\partial \tilde{x}^b}(p)\frac{\partial x^k}{\partial \tilde{x}^c}(p)\frac{\partial x^l}{\partial \tilde{x}^d}(p).
\end{eqnarray*}
Finally, since $D_{v_{ijkl}}=\frac{1}{4}\left(\frac{\partial}{\partial w_{ijkl}}+\frac{\partial}{\partial w_{jikl}}+\frac{\partial}{\partial w_{ijlk}}+\frac{\partial}{\partial w_{jilk}}\right)$, the relations above, together with the symmetries \eqref{symmetries1} give us
\begin{eqnarray*}
\frac{\partial L}{\partial w_{ijkl}}(g_{ij}(p),g_{ij,k}(p),g_{ij,kl}(p))&=&D_{v_{ijkl}}(g_{ij}(p),g_{ij,k}(p),g_{ij,kl}(p))\\
&=&\frac{\partial x^i}{\partial \tilde{x}^a}(p)\frac{\partial x^j}{\partial \tilde{x}^b}(p)\frac{\partial x^k}{\partial \tilde{x}^c}(p)\frac{\partial x^l}{\partial \tilde{x}^d}(p)\frac{\partial L}{\partial w_{abcd}}(\tilde{g}_{ij}(p),\tilde{g}_{ij,k}(p),\tilde{g}_{ij,kl}(p)),
\end{eqnarray*}
as desired.
\end{proof}

\begin{remark}\label{derivtensoriality1} By a slight extension of the above argument, we can show that, for any tensor-valued function $F:W_n\to (\mathbb{R}^n)^{\otimes r}\otimes(\mathbb{R}^n)^{\otimes s}$ satisfying the invariance hypothesis
\begin{equation}
F^I_J(g_{ij},g_{ij,k},g_{ij,kl})=(T_g)^I_J\text{ (}T_g\text{ a fixed }(r,s)\text{-tensor field) in all coordinates on }M,
\end{equation}
the derivatives $\frac{\partial F}{\partial w_{ijkl}}(g_{ij},g_{ij,k},g_{ij,kl})$ give the components of a type $(r+4,s)$-tensor field for every Lorentz manifold $(M^n,g)$. In particular, it follows by induction that, for a function $L$ satisfying the hypotheses of Lemma \ref{weyllemma}, the derivatives $\frac{\partial^pL}{\partial w_{i_1j_1k_1l_1}\cdots \partial w_{i_pj_pk_pl_p}}(g_{ij},g_{ij,k},g_{ij,kl})$ form the components of a type $(4p,0)$-tensor field. For the functions of interest in Theorem \ref{thm01}, this is of course irrelevant, since all such higher derivatives vanish, but this generalization (particularly the case $p=2$) will be useful in establishing other results in the sequel.
\end{remark}

\hspace{6mm} Before proving Theorem \ref{thm01}, let's fix notation and recall some standard constructions from semi-Riemannian geometry. (See, e.g., \cite{ONe}.)

\begin{definition} \label{def:norm_coords} Let $p$ be a point in a semi-Riemannian manifold $(M^n,g)$. A coordinate system $\xi: U \to \mathbb{R}^n$ defined on a neighborhood $U \subset M$ of $p$ is said to be \emph{normal} at $p$ if $\xi(p)=0$ and $g_{ij,k}(p)=0$ in these coordinates (equivalently, if the Christoffel symbols $\Gamma_{ij}^k$ of the Levi-Civita connection vanish at $p$). Moreover, if $(M^n,g)$ is a Lorentz manifold and $\xi$ is a normal coordinate system at $p$ for which $g_{ij}(p)=\eta_{ij}$ (where $\eta_{ij}$ is the $ij$th component of the diagonal $n\times n$ matrix $diag(-1,1,1,\ldots,1)$), then we'll call $\xi$ a \emph{Lorentz} normal coordinate system at $p$. 

The exponential map can always be used to construct normal coordinates at any point on a semi-Riemannian manifold, and the existence of Lorentz normal coordinates at each point of a Lorentz manifold follows by applying an appropriate linear coordinate transformation (or simply starting from an orthonormal basis on $T_pM$). These coordinates will play an essential role in the following proof, which closely follows Weyl's original argument in \cite{Weyl}. 
\end{definition}

\begin{proof}[Proof of Theorem \ref{thm01}] Without loss of generality (see Remark \ref{rk1}), assume that $L$ satisfies the symmetries \eqref{symmetries1}. Define constants $b$ and $a^{ijkl} \in \mathbb{R}$ by 
$$b:=\beta(\eta_{ab},0)\text{ and }a^{ijkl}:=\alpha^{ijkl}(\eta_{ab},0),\text{ (with }(\eta_{ab}) \in \mathbb{R}^{n^2}\text{ defined as before, and }0 \in \mathbb{R}^{n^3})$$
so that for any point $p$ in a Lorentz manifold $(M^n,g)$, in any Lorentz normal coordinate system at $p$, we have
$$f_g(p)=L(g_{ij}(p),g_{ij,k}(p),g_{ij,kl}(p))=a^{ijkl}g_{ij,kl}(p)+b.$$
Now we just need to show that the constants $a^{ijkl}$ automatically satisfy $a^{ijkl}g_{ij,kl}(p)=aR_g(p)$ in all normal coordinate systems, for some $a \in \mathbb{R}$.

\hspace{6mm} For completeness, let's begin by recalling the form of the scalar curvature $R_g(p)$ of a Lorentz manifold $(M^n,g)$ in Lorentz normal coordinates at $p \in M$. Letting $\nabla$ denote the Levi-Civita connection induced by the metric $g$, the Riemann curvature is given in Lorentz normal coordinates at a point $p$ by 
\begin{eqnarray*}
R^i_{jkl}(p)&=&dx^i(\nabla_{\partial_l}\nabla_{\partial_k}\partial_j-\nabla_{\partial_k}\nabla_{\partial_l}\partial_j)(p)\\
&=&dx^i(\nabla_{\partial_l}(\Gamma_{kj}^r\partial_r)-\nabla_{\partial_k}(\Gamma_{jl}^r\partial_r))(p)\\
&=&dx^i(\frac{\partial \Gamma_{kj}^r}{\partial x^l}\partial_r+\Gamma_{kj}^r\Gamma_{rl}^s\partial_s-\frac{\partial \Gamma_{jl}^r}{\partial x^k}\partial_r-\Gamma_{jl}^r\Gamma_{rk}^s\partial_s)(p)\\
&=&\frac{\partial \Gamma^i_{jk}}{\partial x^l}(p)-\frac{\partial \Gamma_{jl}^i}{\partial x^k}(p)\\
&=&\frac{\partial}{\partial x^l}(\frac{1}{2}g^{ia}(g_{ja,k}+g_{ka,j}-g_{jk,a}))(p)-\frac{\partial}{\partial x^k}(\frac{1}{2}g^{ia}(g_{ja,l}+g_{la,j}-g_{jl,a}))(p)\\
&=& \frac{1}{2}g^{ia}(p)(g_{ja,kl}+g_{ka,jl}-g_{jk,al}-g_{ja,lk}-g_{la,jk}+g_{jl,ak})(p),
\end{eqnarray*}
since $\frac{\partial g^{ij}}{\partial x^k}(p)=\frac{\partial g^{ij}}{\partial g_{ab}}(p)g_{ab,k}(p)=0$ in normal coordinates at $p$.
Hence,
\begin{eqnarray*}
R(p)&=&g^{jk}(p)R^i_{jki}(p)\\
&=&\frac{1}{2}g^{jk}(p)g^{ia}(p)(g_{ja,ki}+g_{ka,ji}-g_{jk,ai}-g_{ja,ik}-g_{ia,jk}+g_{ji,ak})(p)\\
&=&\frac{1}{2}g^{jk}(p)g^{ia}(p)(2g_{ij,ka}-2g_{ia,jk}),
\end{eqnarray*}
and since $g^{ij}(p)=\eta_{ij}$ in normal coordinates, we have
\begin{equation}\label{scalarcomp}
R(p)=\eta_{jk}\eta_{il}(g_{ij,lk}-g_{il,jk})=\epsilon_i\epsilon_j(g_{ij,ij}-g_{ii,jj}),
\end{equation}
where $(\epsilon_0,\epsilon_1,\ldots,\epsilon_{n-1})=(-1,1,\ldots,1)$. (In general, if the metric $g$ has signature $(\epsilon_1,\ldots,\epsilon_n)=(-1,\ldots,-1,1,\ldots,1)$, we can choose normal coordinates at $p$ in which $g_{ij}(p)=\epsilon_i\delta_{ij}$, and the above computation still holds.)

\hspace{6mm} Now, fix an arbitrary Lorentz manifold $(M^n,g)$, and let $(x^i)$ be a Lorentz normal coordinate system at a point $p \in M$. Given $(c^i_{jkl})=(c_{ijkl}) \in (\mathbb{R}^n)^{\otimes 4}$ satisfying $c^i_{jkl}=c^i_{kjl}=c^i_{jlk},$ the inverse function theorem guarantees the existence of a smooth coordinate system $(\tilde{x}^i)$ with $\tilde{x}^i(p)=0$, defined implicitly on a neighborhood of $p$ by
$$x^i=\tilde{x}^i+\frac{1}{6}\eta_{ia}c^a_{jkl}\tilde{x}^j\tilde{x}^k\tilde{x}^l.$$
Since $\tilde{x}^i(p)=x^i(p)=0$, we easily compute
\begin{equation}\label{abovecomp}\frac{\partial x^i}{\partial \tilde{x}^j}(p)=\delta^i_j,\text{ }\frac{\partial^2 x^i}{\partial \tilde{x}^j\partial \tilde{x}^k}(p)=0\text{, and }\frac{\partial^3 x^i}{\partial \tilde{x}^j\partial\tilde{x}^k\partial\tilde{x}^l}(p)=\eta_{ia}c^a_{jkl}.
\end{equation}
Letting $\tilde{g}_{ij}$ denote the metric components with respect to the coordinates $(\tilde{x}^i)$ and applying (\ref{abovecomp}) to the transformation formulas \eqref{gijs}, \eqref{gijks}, and \eqref{gijkls}, we obtain
\begin{equation}\label{weyltrans1}
\tilde{g}_{ij}(p)=g_{ij}(p)=\eta_{ij},\text{ }\tilde{g}_{ij,k}(p)=g_{ij,k}(p)=0,\text{ and }
\end{equation}
\begin{equation}\label{weyltrans2}
\tilde{g}_{ij,kl}=g_{ij,kl}(p)+(\eta_{ac}c^c_{ikl}\delta_j^b+\eta_{bc}\delta_i^ac^c_{jkl})\eta_{ab}=g_{ij,kl}(p)+c^j_{ikl}+c^i_{jkl}.
\end{equation}
From the relations \eqref{weyltrans1}, we know that $(\tilde{x}^i)$ is a Lorentz normal coordinate system at $p$, and it therefore follows from \eqref{weyltrans2} that
$$a^{ijkl}g_{ij,kl}(p)+b=f_g(p)=a^{ijkl}\tilde{g}_{ij,kl}(p)+b=a^{ijkl}(g_{ij,kl}(p)+c^j_{ikl}+c^i_{jkl})+b.$$
Thus, the coefficients $a^{ijkl}$ must satisfy
$$a^{ijkl}(c^j_{ikl}+c^i_{jkl})=0,$$
and since $a^{ijkl}=\frac{\partial L}{\partial w_{ijkl}}(\eta_{ab},0)$, it follows from \eqref{symmetries1} that 
\begin{equation}\label{symm1again}
a^{ijkl}=a^{jikl}=a^{ijlk},
\end{equation}
and consequently,
\begin{equation}\label{weylsym1}
0=a^{ijkl}c^j_{ikl}+a^{ijkl}c^i_{jkl}=a^{jikl}c^j_{ikl}+a^{ijkl}c^i_{jkl}=2a^{ijkl}c^i_{jkl}.
\end{equation}
 Given $0 \leq i,j,k,l \leq n-1$, define $(c^r_{stu}) \in \mathbb{R}^{\otimes 4}$ by 
$$c^i_{jkl}=c^i_{kjl}=c^i_{jlk}=c^i_{lkj}=c^i_{klj}=c^i_{ljk}=1\text{ and }c^r_{stu}=0\text{ otherwise.}$$
Applying \eqref{weylsym1} to this choice of $(c^r_{stu})$ then yields
$$a^{ijkl}+a^{ikjl}+a^{ijlk}+a^{ilkj}+a^{iljk}+a^{iklj}=0,$$
and therefore, by \eqref{symm1again},
\begin{equation}\label{niceweylsym}
a^{ijkl}+a^{iljk}+a^{ikjl}=0.
\end{equation} 

\hspace{6mm} In particular, taking $j=k=l$ in \eqref{niceweylsym}, we obtain
$$a^{ijjj}=a^{jijj}=0,$$
and for the case $i=j=l$, it then follows that
$$a^{iiki}+a^{iiik}+a^{ikii}=2a^{iiki}+0=2a^{iiki}=0.$$
Thus, we see that $a^{ijkl}=0$ whenever any integer occurs three or more times in the multi-index $ijkl$. Put another way, if--following \cite{Gi}--we denote by $\deg_m(I)$ the number of times an integer $m$ occurs in a multi-index $I$, we conclude that
\begin{equation}\label{weylcombinatorics0}
a^{ijkl}=0\text{ if }\deg_m(ijkl)\geq 3\text{ for some }0 \leq m \leq n-1.
\end{equation}
Moreover, taking $i=k$, $j=l$ in \eqref{niceweylsym}, we see that $a^{ijij}+a^{ijji}+a^{iijj}=2a^{ijij}+a^{iijj}=0$, so
\begin{equation}\label{weylcombinatorics00}
a^{iijj}=-2a^{ijij}.
\end{equation}
The full utility of this statement will become clear momentarily, but already we have the immediate consequence that $a^{iijj}=a^{jjii}$.

\hspace{6mm} Now, for any $A \in GL_n\mathbb{R}$, it's easy to see (by \eqref{gijks}) that $(\overline{x}^i):=(A^{-1})^i_jx^j$ is another normal coordinate system at $p$. If, moreover, $A \in O(1,n-1)$, then, letting $\overline{g}_{ij}$ denote the metric components with respect to $(\overline{x}^i)$, the transformation formula \eqref{gijs} gives us
$$\overline{g}_{ij}(p)=\frac{\partial x^a}{\partial \overline{x}^i}(p)\frac{\partial x^b}{\partial \overline{x}^j}(p)g_{ab}(p)=A^a_iA^b_j \eta_{ab}=\eta_{ij},$$
so $(\overline{x}^i)$ is Lorentz normal at $p$. In particular, since $\alpha^{ijkl}(w_{ij},w_{ijk})=\frac{\partial L}{\partial w_{ijkl}}(w_{ij},w_{ijk},w_{ijkl})$, it follows that 
$$\frac{\partial L}{\partial w_{ijkl}}(\overline{g}_{ij}(p),\overline{g}_{ij,k}(p),\overline{g}_{ij,kl}(p))=\frac{\partial L}{\partial w_{ijkl}}(g_{ij}(p),g_{ij,k}(p),g_{ij,kl}(p))=a^{ijkl},$$
and applying the tensoriality relation \eqref{scalardiffinvar1} of Lemma \ref{weyllemma} yields
\begin{equation}\label{lorentzinvar1}
a^{ijkl}=\frac{\partial x^i}{\partial \overline{x}^a}(p)\frac{\partial x^j}{\partial \overline{x}^b}(p)\frac{\partial x^k}{\partial \overline{x}^c}(p)\frac{\partial x^l}{\partial \overline{x}^d}(p)a^{abcd}=A^i_aA^j_bA^k_cA^l_da^{abcd}.
\end{equation}

\hspace{6mm} The Lorentz invariance \eqref{lorentzinvar1} will be tremendously valuable in narrowing the field of candidates for $L$. To begin exploiting this, consider the family of Lorentz transformations $A_m$ ($0 \leq m \leq n-1$) given on the standard basis $\{e_0,\ldots,e_{n-1}\}$ for $\mathbb{R}^{1,n-1}$ by $$A_me_m=-e_m,\text{ and }A_me_i=e_i\text{ for }i\neq m.$$
Applying \eqref{lorentzinvar1} to the transformations $A_m$ gives
$$a^{ijkl}=(-1)^{\deg_m(ijkl)}a^{ijkl}.$$
In particular, it follows that $a^{ijkl}=-a^{ijkl}=0$ whenever $\deg_m(ijkl)$ is odd for any $0 \leq m \leq n-1$, so 
\begin{equation}\label{weylcombinatorics1}
a^{rstu}=0\text{ unless }(rstu)=(iijj),\text{ }(ijij),\text{ or }(ijji)\text{ for some }0 \leq i\neq j \leq n-1
\end{equation}
(where the requirement that $i \neq j$ follows from the earlier statement \eqref{weylcombinatorics0}). Thus, we have
$$f_g(p)-b=\Sigma_{i \neq j}(a^{ijij}g_{ij,ij}(p)+a^{ijji}g_{ij,ji}(p)+a^{iijj}g_{ii,jj}(p))=\Sigma_{i \neq j}(2a^{ijij}g_{ij,ij}(p)+a^{iijj}g_{ii,jj}(p)),$$
and, by \eqref{weylcombinatorics00}, this gives us
\begin{equation}\label{weylfunc1}
f_g(p)-b=\Sigma_{i\neq j}(-a^{iijj}g_{ij,ij}(p)-a^{iijj}g_{ii,jj}(p))=-\Sigma_{i, j}a^{iijj}(g_{ij,ij}(p)-g_{ii,jj}(p)).
\end{equation}

\hspace{6mm} We can now easily dispatch with the case $n=2$: in this case, \eqref{weylfunc1} gives us 
$$f_g(p)-b=-a^{0011}(g_{01,01}(p)-g_{00,11}(p))-a^{1100}(g_{10,10}(p)-g_{11,00}(p)),$$
and  it follows immediately by comparison with \eqref{scalarcomp} that
$$f_g(p)=aR_g(p)+b,$$
where $a:=a^{0011}=a^{1100}$.

\hspace{6mm} Assume now that $n>2$. For any permutation $\sigma$ of $\{0,1,\ldots,n-1\}$ fixing $0$, the matrix $A_{\sigma} \in GL_n\mathbb{R}$ whose only nonzero entries are $A^i_{\sigma(i)}=1$ clearly gives an element of $O(n-1)\subset O(1,n-1)$. Applying \eqref{lorentzinvar1} to $A_{\sigma}$ yields
\begin{equation}\label{weylcombinatorics2}
a^{ijkl}=a^{\sigma(i)\sigma(j)\sigma(k)\sigma(l)},
\end{equation}
In particular, it follows that, whenever $n-1 \geq i \neq j>0$, we have $a^{ii00}=a^{00ii}=a^{0011}$, and $a^{iijj}=a^{1122}$. Making these substitutions in \eqref{weylfunc1} now gives us
\begin{equation}\label{weylfunc2}
f_g(p)-b=-a^{0011}\Sigma_j(g_{0j,0j}(p)-g_{00,jj}(p)+g_{j0,j0}(p)-g_{jj,00}(p))-a^{1122}\Sigma_{i,j>0}(g_{ij,ij}(p)-g_{ii,jj}(p)).
\end{equation}
(Note that the positive definite case would be complete at this point.)

\hspace{6mm} Finally, consider the prototypical boost $A\in O(1,n-1)$ given by $A_0^0=A_1^1=\sqrt{2}$, $A_0^1=A_1^0=1$, and $A_j^i=\delta_j^i$ outside of this $2\times 2$ block. Applying \eqref{lorentzinvar1} to this transformation and examining the term $a^{0022}$, we see that
$$a^{0022}=2a^{0022}+\sqrt{2}a^{1022}+\sqrt{2}a^{0122}+a^{1122}=2a^{0022}+a^{1122},$$
so $a^{1122}=-a^{0022}$. By \eqref{weylcombinatorics2}, it follows that $a^{0011}=-a^{1122}$ as well, so \eqref{weylfunc2} now gives us
\begin{eqnarray*}
f_g(p)-b &=& -a^{0011}\Sigma_{j>0}(g_{0j,0j}(p)-g_{00,jj}(p)+g_{j0,j0}(p)-g_{jj,00}(p))\\
&&+a^{0011}\Sigma_{i,j>0}(g_{ij,ij}(p)-g_{ii,jj}(p)))\\
&=& a^{0011}\Sigma_{i\neq j}\epsilon_i\epsilon_j(g_{ij,ij}(p)-g_{ii,jj}(p))\\
&=& a^{0011}\epsilon_i\epsilon_j(g_{ij,ij}(p)-g_{ii,jj}(p)).
\end{eqnarray*}
Thus, setting $a=a^{0011}$ and comparing the above relation with \eqref{scalarcomp}, we conclude that
$$f_g(p)=aR_g(p)+b,$$
as desired. 

\end{proof}

\begin{remark} To extend this proof to metrics of signature $(-1,\ldots,-1,1,\ldots,1)$, the only additional step necessary is to consider those elements of the associated isometry group $O(p,q)$ which permute timelike standard basis vectors, and arrive at the obvious analog of \eqref{weylcombinatorics2}.
\end{remark}

\hspace{6mm} If we were purely interested in identifying cousins of the Einstein-Hilbert action, then Theorem \ref{thm01} certainly gives us a good start. But the real appeal (historically) of the Einstein-Hilbert action lies in the fact that the associated Euler-Lagrange equations can be written in the form $E(g)=0$, where, for every Lorentz manifold $(M^n,g)$, $E(g)$ is a divergence-free $(2,0)$-tensor built out of curvature terms (namely, the Einstein tensor $G^{ij}:=Ric^{ij}-\frac{1}{2}Rg^{ij}$). An obvious next step, then, is to classify all tensors resembling the Einstein tensor. And here, we find a nice analog of the previous theorem, also due to Cartan and Weyl:

\begin{theorem}\label{weyltensorthm}\emph{(\cite{Weyl})} Let $T=(T^{ij}):W_n \to (\mathbb{R}^n)^{\otimes 2}$ be a smooth function (each real-valued component $T^{ij}$ is smooth) of the form
$$T^{ij}(w_{ij},w_{ijk},w_{ijkl})=\alpha^{ijklmp}(w_{ab},w_{abc})w_{klmp}+\beta^{ij}(w_{ab},w_{abc})$$
with the property that, for every Lorentz n-manifold $(M^n,g)$, $\exists$ a (2,0)-tensor field $E(g) \in \mathscr{T}_0^2(M)$ such that $T^{ij}(g_{ij},g_{ij,k},g_{ij,kl})=E^{ij}(g)$ in every coordinate chart $\xi: U\subseteq M \to \mathbb{R}^n$. Suppose, moreover, that $\alpha^{ijklmp}=\alpha^{jiklmp}$ and $\beta^{ij}=\beta^{ji}$ (so that $E(g)$ is symmetric). Then for every Lorentz $(M^n,g)$, $E^{ij}(g)=aRic^{ij}+(bR+c)g^{ij}$ for some constants $a,b,c \in \mathbb{R}$. Furthermore, if the tensor $E(g)$ is divergence free for each $(M^n,g)$, it follows that $E(g)$ is a linear combination of the Einstein tensor $G^{ij}$ and the (inverse) metric $g^{ij}$.
\end{theorem}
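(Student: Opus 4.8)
The plan is to run the proof of Theorem \ref{thm01} again, with the rank-two output tensor $E(g)$ in the role played there by the scalar $f_g$. As in Remark \ref{rk1} I may first symmetrize so that, in addition to the hypothesized $\alpha^{ijklmp}=\alpha^{jiklmp}$, the coefficients inherit from $w_{klmp}$ the symmetries $\alpha^{ijklmp}=\alpha^{ijlkmp}=\alpha^{ijklpm}$. The tensor-valued strengthening of Lemma \ref{weyllemma} recorded in Remark \ref{derivtensoriality1} (applied to the $(2,0)$-tensor-valued $F=T$, so $r=2$, $s=0$) shows that $\alpha^{ijklmp}=\partial T^{ij}/\partial w_{klmp}$, evaluated at the metric $2$-jet, are the components of a $(6,0)$-tensor field. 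Fixing $p$ and passing to Lorentz normal coordinates at $p$, where $g_{ij}(p)=\eta_{ij}$ and $g_{ij,k}(p)=0$, I set $a^{ijklmp}:=\alpha^{ijklmp}(\eta,0)$ and $b^{ij}:=\beta^{ij}(\eta,0)$, obtaining $E^{ij}(g)(p)=a^{ijklmp}g_{kl,mp}(p)+b^{ij}$ with universal constants $a^{ijklmp},b^{ij}$.

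Next I extract the algebraic constraints. The constant term is immediate: $b^{ij}$ is a rank-two $O(1,n-1)$-invariant tensor, hence a multiple of the metric, giving $b^{ij}=c\,\eta^{ij}$ and the contribution $c\,g^{ij}$. For $a^{ijklmp}$ I deploy both engines of the scalar proof. The normal-coordinate change $x^i=\tilde x^i+\tfrac16\eta_{ia}c^a_{bcd}\tilde x^b\tilde x^c\tilde x^d$ preserves Lorentz normality and shifts $\tilde g_{kl,mp}(p)=g_{kl,mp}(p)+c^l_{kmp}+c^k_{lmp}$ exactly as in \eqref{weyltrans2}; invariance of $E$ forces $a^{ijklmp}(c^l_{kmp}+c^k_{lmp})=0$ for all symmetric $c$, and after using the $kl$-symmetry this is the exact analogue of \eqref{niceweylsym}, namely $a^{ijklmp}+a^{ijkmlp}+a^{ijkplm}=0$ (Weyl's relation in the last three active slots, for each fixed $i,j,k$). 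Simultaneously, the tensoriality \eqref{lorentzinvar1} upgrades to full $O(1,n-1)$-invariance of $a^{ijklmp}$ on all six indices, and the reflections $A_m$ annihilate every component in which some integer occurs an odd number of times, precisely as in \eqref{weylcombinatorics1}.

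The heart of the argument, and the step I expect to be the main obstacle, is the classification of the rank-six arrays $a^{ijklmp}$ subject to (i) symmetry in the pairs $(ij),(kl),(mp)$, (ii) full Lorentz invariance, and (iii) Weyl's relation. By Lorentz invariance together with the parity constraint (each value appearing an even number of times, which excludes the Levi-Civita contributions), $a^{ijklmp}$ must be a linear combination of the fifteen products $\eta^{\bullet\bullet}\eta^{\bullet\bullet}\eta^{\bullet\bullet}$ of three metrics. Organizing these fifteen by how the three symmetric pairs are linked — three internal pairings, internal-plus-transposition types, and the fully mixed types — collapses them into a handful of symmetry classes, and imposing Weyl's relation cuts the free parameters down to a two-dimensional space. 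Since $Ric^{ij}$ and $R\,g^{ij}$ already furnish two independent solutions, the task reduces to checking that the solution space is no larger; matching the surviving array against the normal-coordinate expression \eqref{scalarcomp} for $R$ and the analogous expression for $Ric^{ij}$ then yields $E^{ij}(g)=a\,Ric^{ij}+(bR+c)g^{ij}$. This finite linear-algebra bookkeeping over many index terms is where the real labor lies.

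Finally, under the divergence-free hypothesis I rewrite $E=a\,Ric+bR\,g+c\,g=a\,G+(\tfrac a2+b)R\,g+c\,g$ using $G^{ij}=Ric^{ij}-\tfrac12 R g^{ij}$. Since $\nabla_i G^{ij}=0$ by the contracted second Bianchi identity and $\nabla_i g^{ij}=0$, I compute $\nabla_i E^{ij}=(\tfrac a2+b)\nabla^j R$; evaluating against any Lorentz metric with non-constant scalar curvature forces $\tfrac a2+b=0$, so $E=a\,G+c\,g$ is a linear combination of the Einstein tensor and the metric, as claimed.
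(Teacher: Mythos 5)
Your proposal is correct, and its outer skeleton (symmetrization, tensoriality of $\partial T^{ij}/\partial w_{klmp}$ via Remark \ref{derivtensoriality1}, Lorentz normal coordinates, the cubic coordinate change producing the Weyl relation, and reflection parity) coincides with the paper's proof; but your decisive step is genuinely different. Where you invoke the first fundamental theorem of invariant theory for the orthogonal group---every $O(1,n-1)$-invariant rank-six tensor is a linear combination of the fifteen products $\eta^{\bullet\bullet}\eta^{\bullet\bullet}\eta^{\bullet\bullet}$---the paper never uses any such classification. Instead it pins down the constants $a^{ijklmp}$ by hand: using the reflections, the permutation matrices $A_\sigma$, and the prototypical boost, it first shows that for $i\neq j$ the contraction $a^{ijklmp}g_{kl,mp}(p)$ equals $a\,Ric^{ij}(p)$; it then subtracts the Ricci array and analyzes the surviving diagonal components $c^{iiklmp}$, with the final boost computations \eqref{wtenscomp1}--\eqref{wtenscomp4} forcing the remainder to be $bR(p)g^{ij}(p)$, and handles $b^{ij}$ last. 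The linear-algebra bookkeeping you defer does come out exactly as you claim: under the symmetries in $(ij)$, $(kl)$, $(mp)$ the fifteen pairings collapse to five classes, and the cyclic Weyl relation imposes three independent linear conditions on the five coefficients, leaving precisely the two-dimensional span of the arrays that represent $Ric^{ij}$ and $R\,\eta^{ij}$ in normal coordinates, so the matching step closes the argument. What your route buys is conceptual transparency and a method that scales to invariants of any rank; what it costs is reliance on Weyl's classical theorem, which the paper does not prove and which in this setting requires the standard but nontrivial observation that invariance under the real form $O(1,n-1)$ implies invariance under the complexified orthogonal group (so that the fundamental theorem applies in indefinite signature, and with the reflections excluding Levi-Civita terms), whereas the paper's longer component-by-component computation is entirely self-contained. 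Your treatment of the divergence-free refinement via the contracted Bianchi identity is the same as the paper's up to phrasing.
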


\begin{proof} The proof is a modification of our proof of Theorem \ref{thm01}. First, recall (from the expressions we obtained for the Riemann curvature tensor in the proof of the previous theorem) that, in a Lorentz normal coordinate system at a point $p\in M$, the Ricci tensor $Ric^{ij}(p)$ of a Lorentz manifold $(M^n,g)$ is given by
\begin{eqnarray*}
Ric^{ij}(p)&=&\eta_{ir}\eta_{js}\frac{1}{2}\eta_{ka}(g_{sa,rk}+g_{ra,sk}+g_{sk,ar}-g_{sr,ak}-g_{sa,kr}-g_{ka,sr})(p)\\
&=&\frac{1}{2}\epsilon_i\epsilon_j\epsilon_k(g_{jk,ik}+g_{ik,jk}+g_{jk,ki}-g_{ji,kk}-g_{jk,ki}-g_{kk,ji})(p)\\
&=&\frac{1}{2}\epsilon_i\epsilon_j\epsilon_k(g_{ik,jk}+g_{jk,ik}-g_{ij,kk}-g_{kk,ij})(p),
\end{eqnarray*}
and, by \eqref{scalarcomp},
$$R(p)g^{ij}(p)=\epsilon_k\epsilon_l(g_{kl,kl}-g_{kk,ll})(p)\eta_{ij}.$$

\hspace{6mm} As in the scalar case, we can assume without loss of generality that the derivatives of $T$ satisfy the symmetries
\begin{equation}\label{weyltenssym}
\frac{\partial T^{ab}}{\partial w_{ij}}=\frac{\partial T^{ab}}{\partial w_{ji}},\text{ }\frac{\partial T^{ab}}{\partial w_{ijk}}=\frac{\partial T^{ab}}{\partial w_{jik}},\text{ }\frac{\partial T^{ab}}{\partial w_{ijkl}}=\frac{\partial T^{ab}}{\partial w_{jikl}}=\frac{\partial T^{ab}}{\partial w_{ijlk}}=\frac{\partial T^{ab}}{\partial w_{jilk}}.
\end{equation}
And by an argument identical to the proof of Lemma \ref{weyllemma}, it's easy to see that, for any Lorentz $n$-fold $(M,g)$, on the overlap of any two coordinate charts $(x^i)$ and $(\tilde{x}^i)$, we have
\begin{equation}\label{weyltensderiv}
\frac{\partial T^{ij}}{\partial w_{klmp}}(g_{ij},g_{ij,k},g_{ij,kl})=\frac{\partial x^i}{\partial \tilde{x}^a}\frac{\partial x^j}{\partial\tilde{x}^b}\frac{\partial x^k}{\partial \tilde{x}^c}\frac{\partial x^l}{\partial \tilde{x}^d}\frac{\partial x^m}{\partial \tilde{x}^e}\frac{\partial x^p}{\partial \tilde{x}^f}\frac{\partial T^{ab}}{\partial w_{cdef}}(\tilde{g}_{ij},\tilde{g}_{ij,k},\tilde{g}_{ij,kl}),
\end{equation}
(since $T^{ij}(g_{ij},g_{ij,k},g_{ij,kl})=E^{ij}(g)=\frac{\partial x^i}{\partial \tilde{x}^a}\frac{\partial x^j}{\partial \tilde{x}^b}\tilde{E}^{ab}(g)=\frac{\partial x^i}{\partial \tilde{x}^b}\frac{\partial x^j}{\partial \tilde{x}^b}T^{ab}(\tilde{g}_{ij},\tilde{g}_{ij,k},\tilde{g}_{ij,kl})$).

\hspace{6mm} Now, set $a^{ijklmp}=\alpha^{ijklmp}(\eta_{ab},0)$ and $b^{ij}=\beta^{ij}(\eta_{ab},0)$, so that 
$$E^{ij}(g)(p)=a^{ijklmp}g_{kl,mp}(p)+b^{ij}$$
in Lorentz normal coordinates at a point $p$ of any Lorentz manifold $(M^n,g)$. Then
\begin{equation}\label{weyltenssym2}
a^{ijklmp}=a^{ijlkmp}=a^{ijklpm}=a^{ijlkpm},
\end{equation}
by \eqref{weyltenssym}, and for any $A \in O(1,n-1)$, it follows from \eqref{weyltensderiv} that
\begin{equation}\label{wtenslorentzinvar}
A_r^iA_s^jA_t^kA_u^lA_v^mA_w^pa^{rstuvw}=a^{ijklmp}.
\end{equation} 
Moreover, by the same argument we used in the proof of Theorem \ref{thm01} to obtain \eqref{niceweylsym}, we again have
\begin{equation}
a^{ijklmp}+a^{ijkmlp}+a^{ijkpml}+a^{ijklpm}+a^{ijkmpl}+a^{ijkplm}=0,
\end{equation}
\begin{equation}\label{weylpermsym}
\text{so (by \eqref{weyltenssym}) }a^{ijklmp}+a^{ijkmpl}+a^{ijkplm}=0,
\end{equation}
and consequently
\begin{equation}\label{weylpermsym1}
a^{ijklmp}=0\text{ if }\deg_m(klmp)>2\text{ for any }0\leq m \leq n-1,
\end{equation} 
as before.

\hspace{6mm} As in the proof of Theorem \ref{thm01}, we can use the Lorentz invariance \eqref{wtenslorentzinvar} of the terms $a^{ijklmp}$ to conclude that
\begin{equation}\label{wtensevendegree}
a^{ijklmp}=0\text{ whenever }\deg_m(ijklmp)\text{ is odd for any }0 \leq m \leq n-1.
\end{equation}
When $i \neq j$, this implies that $a^{ijklmp}=0$ unless  
\begin{eqnarray*}
(klmp)&=&(ijrr), (irjr), (irrj), (jirr),\\
&& (rijr), (rirj), (jrir), (rjir),\\
&& (rrij), (jrri), (rjri),\text{ or } (rrji)
\end{eqnarray*}
for some $0 \leq r \leq n-1$; furthermore \eqref{weylpermsym1} implies that $a^{ijklmp}$ will vanish if this $r=i$ or $j$ (so immediately we see that $a^{ijklmp}=0$ whenever $i \neq j$ in the case $n=2$). By the symmetries \eqref{weyltenssym2}, we already know that $a^{ijijkk}=a^{ijjikk}$, $a^{ijkkij}=a^{ijkkji}$, $a^{ijikjk}=a^{ijikkj}=a^{ijkijk}=a^{ijkikj}$, and $a^{ijjkik}=a^{ijkjik}=a^{ijjkki}=a^{ijkjki}$. And from the symmetries \eqref{weylpermsym}, we see that
$$a^{ijijkk}+2a^{ijikjk}=a^{ijijkk}+a^{ijikjk}+a^{ijikkj}=0,$$
so $2a^{ijikjk}=-a^{ijijkk}=-a^{ijjikk}=2a^{ijjkik}$, and consequently
$$a^{ijkkij}-a^{ijijkk}=a^{ijkkij}+2a^{ijikjk}=a^{ijkkij}+a^{ijikjk}+a^{ijjkik}=a^{ijkkij}+a^{ijkijk}+a^{ijkjki}=0,$$
by \eqref{weylpermsym}. We've now shown that $a^{ijkkij}=a^{ijijkk}=-2a^{ijikjk}=-2a^{ijjkik},$ and combining this with the other properties we've derived so far, we see already that
\begin{eqnarray*}
a^{ijklmp}g_{kl,mp}(p)&=&\Sigma_{k\neq i,j}(2a^{ijijkk}g_{ij,kk}+2a^{ijkkij}g_{kk,ij}+4a^{ijikjk}g_{ik,jk}+4a^{ijjkik}g_{jk,ik})(p)\\
&=&-2\Sigma_{k \neq i,j}a^{ijijkk}(g_{ik,jk}+g_{jk,ik}-g_{ij,kk}-g_{kk,ij})(p).
\end{eqnarray*}
--a promising form for something we wish to show is a component of the Ricci tensor.

\hspace{6mm} Next, by applying the Lorentz invariance \eqref{wtenslorentzinvar} to the permutations $A_{\sigma}\in S_{n-1}\subset O(1,n-1)$ and the prototypical boost from the proof of Theorem \ref{thm01}, it's easy to check that (still with the stipulation that $i \neq j$), for all $0 \leq k,l \leq n-1$ different from $i$ and $j$, we have $a^{ijijkk}=a^{ijijll}$ when $k,l>0$, and $a^{ijijkk}=-a^{ijij00}$ when $k$ (as well as $i,j$) is positive. Hence,
$$a^{ijklmp}g_{kl,mp}(p)=2a^{ijij00}\Sigma_{k\neq i,j}(g_{ik,jk}+g_{jk,ik}-g_{ij,kk}-g_{kk,ij})(p)$$
when $i,j>0$, and
$$a^{0jklmp}g_{kl,mp}(p)=-a^{0j0jrr}\Sigma_{k\neq 0,j}(g_{0k,jk}+g_{jk,0k}-g_{0j,kk}-g_{kk,0j})(p)$$
when $j>0$ and $r\in \{1,\ldots,n-1\}$ is any index different from $0$ and $j$.

By another application of the same Lorentz invariance arguments, we observe, moreover, that $a^{ijij00}=a^{klkl00}$ when $i,j,k,l>0$, and $a^{020211}=a^{121200}$. Thus, letting $a=4a^{020211}$, we can indeed conclude that
\begin{equation}\label{ricciobs}
a^{ijklmp}g_{kl,mp}(p)=a\epsilon_i\epsilon_j\Sigma_{k \neq i,j}\epsilon_k(g_{ik,jk}+g_{jk,ik}-g_{ij,kk}-g_{kk,ij})(p)=aRic^{ij}(p)
\end{equation}
when $i \neq j$.

\hspace{6mm} Now, let $b^{ijklmp}$ denote the unique constants satisfying the symmetries \eqref{weyltenssym2} such that 
$$aRic^{ij}(p)=b^{ijklmp}g_{kl,mp}(p)$$
in any Lorentz normal coordinate system at a point $p$ of any Lorentz manifold $(M^n,g)$. Setting $c^{ijklmp}:=a^{ijklmp}-b^{ijklmp}$, we note that the terms $c^{ijklmp}$ satisfy the same symmetries as $a^{ijklmp}$, including the Lorentz invariance \eqref{wtenslorentzinvar}; and, by \eqref{ricciobs}, it's clear that $c^{ijklmp}=0$ whenever $i \neq j$.

\hspace{6mm} For simplicity, assume that $n>2$ for the remainder of the argument (as usual, the $n=2$ case can be dispensed with fairly easily, so we'll leave it to the reader). Since $c^{ijklmp}$ satisfies both \eqref{wtenslorentzinvar} and \eqref{weylpermsym1}, now-familiar arguments show that the only nonvanishing terms $c^{iiklmp}$ are those of the forms $c^{iijjkk}$ or $c^{iijkjk}=c^{iijkkj},$ where $j\neq k,$
$$c^{iijjkk}=-2c^{iijkjk}=-2c^{iikjkj}=c^{iikkjj},$$
$$c^{iijjkk}=c^{\sigma(i)\sigma(i)\sigma(j)\sigma(j)\sigma(k)\sigma(k)}\text{ for any automorphism }\sigma\text{ of }\{0,1,\ldots,n-1\}\text{ fixing }0,$$
$$\text{ and }c^{iijjkk}=-c^{iijj00}\text{ if }i,j,\text{ and }k\text{ are all distinct and positive}.$$
(Likewise, $c^{iijjkk}=-c^{00jjkk}$ when $i,j,k\in \{0,1,\ldots,n-1\}$ are distinct and positive). Consequently, we see that
\begin{eqnarray*}
c^{iiklmp}g_{kl,mp}(p)&=&c^{iijjkk}(g_{jj,kk}-g_{jk,jk})(p)\\
&=&c^{111122}\Sigma_{k\neq i}\epsilon_k(g_{ii,kk}-g_{ik,ik}+g_{kk,ii}-g_{ik,ik})(p)-c^{110022}\Sigma_{j,k\neq i}\epsilon_j\epsilon_k(g_{jj,kk}-g_{jk,jk})(p)
\end{eqnarray*}
when $i>0$, and 
$$c^{00klmp}g_{kl,mp}(p)=c^{000022}\Sigma_{k>0}(g_{00,kk}-g_{0k,0k}+g_{kk,00}-g_{0k,0k})(p)+c^{001122}\Sigma_{j,k>0}\epsilon_j\epsilon_k(g_{jj,kk}-g_{jk,jk})(p).$$

Finally, by yet another application of the invariance of $c^{ijklmp}$ under the action of the boost from the proof Theorem \ref{thm01}, we obtain
\begin{equation}\label{wtenscomp1}
c^{000022}=4c^{000022}+2c^{001122}+2c^{110022}+c^{111122},
\end{equation}
\begin{equation}\label{wtenscomp2}
c^{111122}=4c^{111122}+2c^{110022}+2c^{001122}+c^{000022},
\end{equation}
\begin{equation}\label{wtenscomp3}
c^{001122}=4c^{001122}+2c^{000022}+2c^{111122}+c^{110022},
\end{equation}
\begin{equation}\label{wtenscomp4}
\text{and }c^{110022}=4c^{110022}+2c^{111122}+2c^{000022}+c^{001122}.
\end{equation}
Taking the difference of \eqref{wtenscomp1} and \eqref{wtenscomp2} yields $c^{000022}=c^{111122}$, and it follows similarly from \eqref{wtenscomp3} and \eqref{wtenscomp4} that $c^{110022}=c^{001122}$. Finally, applying these equalities to \eqref{wtenscomp1}, we see that $c^{000022}=-c^{001122}$ as well, so 
$$c^{iiklmp}g_{kl,mp}(p)=c^{111122}\epsilon_j\epsilon_k(g_{jj,kk}-g_{jk,jk})(p)\text{ when }i>0,$$
$$\text{and }c^{00klmp}g_{kl,mp}(p)=-c^{111122}\epsilon_j\epsilon_k(g_{jj,kk}-g_{jk,jk})(p).$$
Thus, setting $b=-c^{111122}$, we have
$$c^{ijklmp}g_{kl,mp}(p)=b\epsilon_k\epsilon_l(g_{kl,kl}-g_{kk,ll})(p)\eta_{ij}=bR(p)g^{ij}(p).$$

\hspace{6mm} Since we've now shown that $a^{ijklmp}g_{kl,mp}(p)=aRic^{ij}(p)+bR(p)g^{ij}(p)$ in normal coordinates, it follows readily that the terms $b^{ij}$ satisfy the Lorentz invariance $A^i_rA^j_sb^{rs}=b^{ij}$ ($A \in O(1,n-1)$), and a quick application of the usual arguments yields $b^{ij}=b^{11}\eta_{ij}=cg^{ij}(p)$ (where $c:=b^{11}$). Thus,
$$a^{ijklmp}g_{kl,mp}(p)+b^{ij}=aRic^{ij}(p)+bR(p)g^{ij}(p)+cg^{ij}(p)$$
in any normal coordinate system, so indeed,
$$E^{ij}(g)=aRic^{ij}+bRg^{ij}+cg^{ij},$$
as desired.

\hspace{6mm} That $E(g)$ is divergence-free (for every $g$) precisely when it is a linear combination of the metric and the Einstein tensor $G=Ric-\frac{1}{2}Rg$ now follows from the trivial observation that $Rg$ is divergence-free precisely when $R$ is constant.
\end{proof}

\hspace{6mm} When $n \leq 4,$ a combinatorial argument due to Lovelock reveals that the conclusion of Theorem \ref{weyltensorthm} still holds if we replace the linearity assumption $\frac{\partial^2T^{ij}}{\partial w_{ijkl}\partial w_{mpqr}}=0$ with the requirement that $E(g)$ be divergence-free (see Chapter 8 of \cite{LR}). This works because the divergence-free requirement introduces more symmetries to the derivatives of $T^{ij}$, which together imply $\frac{\partial^2T^{ij}}{\partial w_{ijkl}\partial w_{mpqr}}=0$ in dimension $<5$, via the pigeonhole principle. Hence, in the dimensions of interest in General Relativity, the only divergence-free tensors that are functions of the metric components and their first and second derivatives are those of the form $aG+bg$.

\section{Low-Order Lagrangians of the Metric and a Matter Field}\label{sec:new}

\hspace{6mm} With an understanding of the classical results, we're now prepared to prove our main theorem, from which the conclusion of Bray's conjecture follows easily.

\hspace{6mm} We now take the domain of our Lagrangians to be $Y^m_n=W_n\times(\mathbb{R}^n)^{\otimes m}\times (\mathbb{R}^n)^{\otimes (m+1)}$, where $W_n$ is still given by \eqref{wdef}. For a function $L:Y^m_n\to \mathbb{R}$, the statement that $L$ has the algebraic form \eqref{danhyp} can be stated more clearly as follows:
\begin{equation}\label{algconstraint}
D_uD_vD_wL=0\text{ }\forall u,v,w \in 0\times(\mathbb{R}^n)^{\otimes 3}\times(\mathbb{R}^n)^{\otimes 4}\times (\mathbb{R}^n)^{\otimes m}\times(\mathbb{R}^n)^{\otimes m+1} \subset Y_n^m.
\end{equation}
With these definitions in place, we are now in a position to state our result:

\begin{theorem}\label{myscalarthm}\emph{(Main Theorem)} Let $L:Y^m_n \to \mathbb{R}$ be a smooth function satisfying \eqref{algconstraint} and suppose that, for every triple $(M^n,g,D)$ (where $g$ is a Lorentz metric and $D$ is a type (0,m)-tensor on $M$), $\exists$ $f_{g,D}\in C^{\infty}(M)$ such that
$$L(g_{ij},g_{ij,k},g_{ij,kl},D_{i_1\cdots i_m},D_{i_1\cdots i_m,k})=f_{g,D}$$
in all coordinate systems on $M$. Then there are constants $a,b,c \in \mathbb{R}$, a quadratic invariant $Q_g(D)=\mu^{I J}(g_{ab})D_ID_J$, an invariant trace term $T_g(D)$, and a divergence term $Bd_g(D)$ such that 
$$f_{g,D}=a+bR_g+c|d\gamma|_g^2+Q_g(D)+T_g(D)+Bd_g(D)$$
for all triples $(M,g,D)$ (where $d\gamma$ is the exterior derivative of the m-form $\gamma_{i_1\cdots i_m}=\frac{1}{m!}\Sigma_{\sigma \in S_m}sgn(\sigma)D_{i_{\sigma(1)}\cdots i_{\sigma(m)}}$).
\end{theorem}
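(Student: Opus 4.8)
The plan is to run the Cartan--Weyl machinery of Theorems \ref{thm01} and \ref{weyltensorthm}, but to organize everything by the degree of $L$ in the matter variables. First I would record the tensoriality inputs: exactly as in Lemma \ref{weyllemma} and Remark \ref{derivtensoriality1}, the partial derivatives of $L$ in the top slots $w_{ijkl}$ and $D_{I,j}$, evaluated on the jet of a pair $(g,D)$, are tensorial. The one genuinely new observation is that at the centre of a normal coordinate system $D_{I,j}(p)=\nabla_jD_I(p)$ is already a tensor, and the cubic coordinate change $x^i=\tilde x^i+\frac{1}{6}\eta_{ia}c^a_{jkl}\tilde x^j\tilde x^k\tilde x^l$ from the proof of Theorem \ref{thm01} fixes $g_{ij}(p)$, $g_{ij,k}(p)$, $D_I(p)$ and $D_{I,j}(p)$ while shifting only $g_{ij,kl}(p)$; this isolates the second-metric-derivative coefficients cleanly. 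Substituting $D\mapsto tD$ and matching powers of $t$, the scalar splits as $f_{g,D}=f^{(0)}_g+f^{(1)}_{g,D}+f^{(2)}_{g,D}$ into separately invariant pieces of degree $0,1,2$ in $(D_I,D_{I,j})$, and I would treat these in turn.

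For the degree-zero piece I would first kill the genuinely quadratic second-derivative terms $g_{ij,kl}g_{mp,qr}$: by \eqref{gijkls} their transformation produces contributions quadratic in the third derivatives of the coordinate change, and since every surviving monomial has total degree at most $2$ in the derivative variables, nothing can cancel them (equivalently, the cubic change plus invariance annihilates the quadratic coefficient). What remains is affine-linear in $g_{ij,kl}$ of precisely the shape covered by Theorem \ref{thm01}, so $f^{(0)}_g=a+bR_g$. The degree-one piece is linear in one of $D_I,D_{I,j}$ times at most one metric-derivative factor; the terms containing $g_{ij,kl}$ become, after one integration by parts, a divergence plus first-order terms, and the surviving first-order invariant assembles into $\theta^{Ij}(g)\nabla_jD_I=\nabla_j(\theta^{Ij}D_I)$, a pure divergence (using $\nabla\theta=0$). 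The only algebraic leftover is $\eta^I(g)D_I=T_g(D)$, so $f^{(1)}_{g,D}=T_g(D)+Bd_g(D)$.

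The crux is the degree-two piece, which invariance forces into the covariant shape $\xi^{Ij,Kl}(g)\nabla_jD_I\nabla_lD_K+\nu^{I,Jk}(g)D_I\nabla_kD_J+\mu^{IJ}(g)D_ID_J$, and here the algebraic constraint \eqref{algconstraint} does the essential work. Writing $\nabla_jD_I=D_{I,j}-\Gamma^s_{jk}(\cdots)$ schematically and expanding, the covariant expression contains a part of degree $4$ in the jet variables (quadratic in $\Gamma\sim g_{ij,k}$ and quadratic in $D$) coming from the $\xi$-term, together with degree-$3$ parts from the $\xi$- and $\nu$-terms; hypothesis \eqref{algconstraint} demands that all of these cancel identically in $(\Gamma,D)$. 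The vanishing of the degree-$4$ part is a closed condition on $\xi$ alone, and combined with the Lorentz invariance $A\otimes\cdots\otimes A\,\xi=\xi$ and the index combinatorics of Theorems \ref{thm01}--\ref{weyltensorthm} it forces the contraction to run through the totally antisymmetric part of $\nabla_jD_I$. Since antisymmetrizing $\nabla_jD_{i_1\cdots i_m}$ over all $m+1$ indices reproduces $d\gamma$ --- the Christoffel terms dropping out by antisymmetry, which is exactly why $|d\gamma|_g^2$ lies in the class \eqref{algconstraint} --- the $\xi$-term collapses to $c|d\gamma|_g^2$. The residual degree-$3$ conditions then force the $\nu$-term to be, up to its symmetric divergence part $\nabla_k(\,\cdot\,)$, absorbed into $\mu^{IJ}D_ID_J=Q_g(D)$ and the divergence $Bd_g(D)$.

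I expect this degree-two step to be the main obstacle: one must show that the only way a metric-contraction of $\nabla D\otimes\nabla D$ can be purged of its Christoffel cross-terms --- so as to lie in the admissible class \eqref{algconstraint} at all --- is for it to reduce, modulo a divergence and an algebraic $D^2$-term, to $|d\gamma|_g^2$. This is precisely where the distinguished role of the fully antisymmetric part $\gamma$ emerges, and it is the representation-theoretic heart of the argument. Once it is in place, reassembling $f_{g,D}=a+bR_g+c|d\gamma|_g^2+Q_g(D)+T_g(D)+Bd_g(D)$ is immediate.
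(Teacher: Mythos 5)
Your architecture (splitting by degree in $(D_I,D_{I,j})$, tensoriality of the top-slot derivatives, antisymmetry forcing the collapse to $c|d\gamma|_g^2$) parallels the paper's, but there is a genuine gap in the degree-zero step, and it propagates. You claim that the cubic change $x^i=\tilde x^i+\frac{1}{6}\eta_{ia}c^a_{jkl}\tilde x^j\tilde x^k\tilde x^l$ "plus invariance annihilates the quadratic coefficient" $\gamma^{ijklqrst}=\partial^2\hat L/\partial x_{ijkl}\partial x_{qrst}$. It cannot. A cubic change of normal coordinates fixes $g_{ij}(p)$ and $g_{ij,k}(p)$ and shifts $g_{ij,kl}(p)$ by $c^i_{jkl}+c^j_{ikl}$ (the paper's \eqref{weyltrans2}), and because $c^i_{jkl}$ is symmetric in its three lower indices these shifts cancel in pairs in the normal-coordinate expression for the Riemann tensor at $p$; hence the curvature at $p$ is unchanged. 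Consequently the coefficient tensors of $R^2$, $|Ric|^2$, $|Riem|^2$ --- all nonzero, all purely quadratic in $g_{ij,kl}$ at the centre of normal coordinates --- satisfy every identity a cubic change can produce (they satisfy \eqref{gammasyms2}), so cubic changes alone can never force $\gamma^{ijklqrst}=0$. What actually kills $\gamma$ in the paper is the \emph{quadratic} part $b^i_{jk}$ of the change \eqref{gencoordchange}: it shifts $g_{ij,kl}(p)$ by $(b^a_{ik}b^b_{jl}+b^a_{il}b^b_{jk})\eta_{ab}$, while \eqref{algconstraint} pins the coefficients $\gamma^{ijklqrst}$ down as functions of $g_{ab}$ alone, so they cannot transform to compensate; this yields \eqref{gammasyms1}, then \eqref{moregammasyms} and \eqref{almostgpart}, which combine with the cubic-change identity \eqref{gammasyms2} to give $\gamma=0$. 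Your degree-counting parenthesis gestures at the right reason $R^2$ is excluded (it would need quartic companions in $g_{ij,k}$ that \eqref{algconstraint} forbids), but attaches it to the wrong coordinate changes: the constraints you derive are exactly the ones curvature-squared invariants already satisfy. The same repair is needed where you treat the cross terms $g_{lm,pq}D_{K,l}$ and $g_{lm,pq}D_K$, which the paper kills by the identical $b$-change argument ($\beta^{Ijlmpq}=0$ and $\lambda^{Ilmpq}=0$).

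Two secondary points. First, your degree-one step invokes integration by parts, but the theorem is a pointwise identity, not a statement about actions: $\partial_q\bigl(\lambda^{Klmpq}(g_{ab})g_{lm,p}D_K\bigr)$ is a coordinate divergence of a non-tensorial object and is not an admissible term $Bd_g(D)=\beta^{Ij}(g_{ab})D_{I;j}$; the paper instead proves those cross-coefficients vanish outright. Second, your degree-two step starts from the assumption that the invariant can be written covariantly as $\xi\nabla D\nabla D+\nu D\nabla D+\mu DD$; that replacement principle is essentially the thing being proved, and the paper avoids it by deriving the full antisymmetry of $\eta^{IjKl}$ directly from invariance under quadratic coordinate changes (equation \eqref{antisymmcond}) and only then concluding, via Lorentz invariance, that the quadratic term is $c|d\gamma|_g^2$. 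Your identification of antisymmetry as the heart of that collapse is correct and matches the paper's mechanism; but both it and the degree-zero step ultimately rest on the quadratic coordinate changes that your proposal omits.
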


\vspace{4mm}

\begin{remark} Once again, by replacing $L$ with the function $L\circ \phi$, where $\phi: Y_n^m \to Y_n^m$ is given by 
$$\phi(x_{ij},x_{ijk},x_{ijkl},y_{I},y_{I,j})=(\frac{1}{2}(x_{ij}+x_{ji}),\frac{1}{2}(x_{ijk}+x_{jik}),\frac{1}{4}(x_{ijkl}+x_{jikl}+x_{ijlk}+x_{jilk}),y_I,y_{I,j}),$$
we can assume without loss of generality that $L$ satisfies the obvious symmetries 
\begin{equation}\label{mythmsyms}
D_vL=D_{\phi(v)}L;\text{ }D_vD_uL=D_{\phi(v)}D_{\phi(u)}L\text{ for }u,v \in Y_n^m.
\end{equation}
\end{remark}

\begin{remark} Note that the result extends immediately to the case where $D$ takes values in type $(r,s)$-tensor fields, since the raising and lowering of indices is a $0$th-order operation in the metric.
\end{remark}

Once we've established the main theorem, it's not difficult to see how the result of Bray's conjecture follows:

\begin{corollary}\label{brayconj}\emph{(Bray's Conjecture)} Let $L:Y^m_n \to \mathbb{R}$ be a smooth function satisfying \eqref{algconstraint} such that, for every triple $(M^n,g,\nabla)$ ($\nabla$ an affine connection on $TM$), $\exists$ $f_{g,\nabla} \in C^{\infty}(M)$ such that
$$L(g_{ij},g_{ij,k},g_{ij,kl},\Gamma_{ijk},\Gamma_{ijk,l})=f_{g,\nabla}$$
in all coordinates on $M$. Then there are constants $a,b,c \in \mathbb{R}$, a quadratic invariant $Q_g(D)=\mu^{IJ}(g_{ab})D_ID_J$, and a divergence term $Bd_g(D)$ such that
$$f_{g,\nabla}=a+bR_g+c|d\gamma|_g^2+Q_g(D)+Bd_g(D)$$
for all triples $(M,g,\nabla)$ (where $D_{ijk}=\Gamma_{ijk}-\frac{1}{2}(g_{jk,i}+g_{ik,j}-g_{ij,k})$ and $\gamma$ is the fully antisymmetric part of $D$).
\end{corollary}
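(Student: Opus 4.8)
The plan is to reduce Bray's conjecture to the Main Theorem (Theorem \ref{myscalarthm}) through the linear change of variables that relates an affine connection to its difference tensor. For a fixed metric $g$, the assignment sending a connection $\nabla$ with symbols $\Gamma_{ijk}$ to the difference tensor $D_{ijk} = \Gamma_{ijk} - \frac{1}{2}(g_{jk,i}+g_{ik,j}-g_{ij,k})$ is a bijection onto the $(0,3)$-tensor fields: $D$ is a genuine tensor (the inhomogeneous terms in the transformation law of $\Gamma_{ijk}$ cancel those of the Levi-Civita part), and since the difference of two connections is an arbitrary $(1,2)$-tensor, every $(0,3)$-tensor arises this way. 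Thus the data $(M,g,\nabla)$ of the corollary corresponds exactly to the data $(M,g,D)$ of the Main Theorem in the case $m=3$.

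To make this precise at the level of jets, I would introduce the map $\Phi:Y^3_n\to Y^3_n$ that fixes the $W_n$-factor and reparametrizes the last two slots by
$$\Gamma_{ijk}=\tfrac12(w_{jki}+w_{ikj}-w_{ijk})+D_{ijk},\qquad \Gamma_{ijk,l}=\tfrac12(w_{jkil}+w_{ikjl}-w_{ijkl})+D_{ijk,l},$$
the second relation arising by differentiating the first. The appearance here of the second-derivative variables $w_{ijkl}$ is precisely why one must work with the more general Lagrangians \eqref{genlagrange} in place of Bray's original form \eqref{hughlag} (cf.\ the footnote to \eqref{danhyp}). Since $\Phi$ is the identity on $w_{ij}$ it is a well-defined self-map of $Y^3_n$ (it preserves $\det(w_{ij})\neq 0$), and it is linear, carrying the derivative subspace $0\times(\mathbb{R}^n)^{\otimes 3}\times(\mathbb{R}^n)^{\otimes 4}\times(\mathbb{R}^n)^{\otimes 3}\times(\mathbb{R}^n)^{\otimes 4}$ into itself.

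Setting $\tilde L:=L\circ\Phi$, I would then verify that $\tilde L$ satisfies the hypotheses of the Main Theorem. The algebraic constraint \eqref{algconstraint} is preserved: because $\Phi$ is linear and preserves the derivative subspace, the chain rule shows that the third directional derivatives of $\tilde L$ along that subspace equal third derivatives of $L$ taken in directions again lying in the subspace, hence vanish. The invariance hypothesis transfers through the bijection above: given $(M,g,D)$, recover $\nabla$ from the displayed formula; then in every chart $\Phi$ carries the jet of $(g,D)$ to the jet of $(g,\nabla)$, so $\tilde L$ evaluated on the $(g,D)$-data equals the global function $f_{g,\nabla}$, and we may take $f_{g,D}:=f_{g,\nabla}$.

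Applying the Main Theorem to $\tilde L$ produces $f_{g,\nabla}=a+bR_g+c|d\gamma|_g^2+Q_g(D)+T_g(D)+Bd_g(D)$, and it remains only to see that the trace term drops. Since $T_g(D)=\eta^I(g_{ab})D_I$ is a scalar invariant that is linear in a rank-$3$ tensor, its coefficients must constitute an invariant $(3,0)$-tensor built pointwise from the metric; but the invariance exploited throughout Section \ref{sec:classic} is under the \emph{full} group $O(1,n-1)$, including the reflections $A_m$, and every invariant tensor of that group has even rank (the volume form, being only $SO$-invariant, is excluded). Hence no invariant of odd rank $3$ exists and $T_g(D)\equiv 0$. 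Finally, antisymmetrizing the Levi-Civita part over all three indices annihilates it (each summand is symmetric in a pair of indices), so the fully antisymmetric part $\gamma$ of $D$ coincides with that of $\Gamma$ and the term $c|d\gamma|_g^2$ is unambiguous. Essentially all of the difficulty resides in the Main Theorem itself; the only point of the corollary that genuinely requires care is this vanishing of the trace term, which is really a statement about the nonexistence of odd-rank orthogonal invariants.
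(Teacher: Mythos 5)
Your proposal is correct and follows essentially the same route as the paper: the substitution $\Gamma_{ijk}\mapsto D_{ijk}+\tfrac12(w_{jki}+w_{ikj}-w_{ijk})$ (and its derivative analogue) defining $\tilde L=L\circ\Phi$ is exactly the paper's reduction to Theorem \ref{myscalarthm} with $m=3$, and your elimination of the trace term via the nonexistence of odd-rank $O(1,n-1)$-invariant tensors is the same parity argument the paper makes by applying the coordinate reflection $x^i\mapsto -x^i$ (i.e.\ $-\mathrm{Id}\in O(1,n-1)$) to the invariant scalar $\alpha^{ijk}(g_{ab})D_{ijk}$. Your explicit verification that $\Phi$ is linear, preserves the derivative subspace, and hence preserves the constraint \eqref{algconstraint} fills in a step the paper leaves as ``clearly satisfies the hypotheses,'' which is a welcome addition rather than a deviation.
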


\begin{proof}[Proof of Corollary \ref{brayconj} (Assuming Theorem \ref{myscalarthm})] Given such a function $L$, define another function $\tilde{L}:Y_n^m\to \mathbb{R}$ by 
$$\tilde{L}(x_{ij},x_{ijk},x_{ijkl},y_{ijk},y_{ijkl})=L(x_{ij},x_{ijk},x_{ijkl},y_{ijk}+\frac{1}{2}(x_{jki}+x_{ikj}-x_{ijk}),y_{ijkl}+\frac{1}{2}(x_{jkil}+x_{ikjl}-x_{ijkl})),$$
so that
$$\tilde{L}(g_{ij},g_{ij,k},g_{ij,kl},D_{ijk},D_{ijk,l})=L(g_{ij},g_{ij,k},g_{ij,kl},\Gamma_{ijk},\Gamma_{ijk,l})=f_{g,\nabla},$$
where $\nabla$ is the connection that differs from the Levi-Civita connection $\nabla_g$ by the tensor $D$. $\tilde{L}$ then clearly satisfies the hypotheses of Theorem \ref{myscalarthm} in the case $m=3$ with $f_{g,D}=f_{g,\nabla},$ so $f_{g,\nabla}$ has the form
$$f_{g,\nabla}=a+bR_g+c|d\gamma|_g^2+Q_g(D)+T_g(D)+Bd_g(D).$$
To complete the proof, observe that, since the trace component $T_g(D)=\alpha^{ijk}(g_{ab})D_{ijk}$ is invariant under changes of coordinates, the coordinate transformation $x^i\mapsto -x^i$ yields
$$\alpha^{ijk}(g_{ab})D_{ijk}=\alpha^{ijk}(g_{ab})(-D_{ijk})=-\alpha^{ijk}(g_{ab})D_{ijk},$$
so $T_g(D)$ must vanish identically.
\end{proof}

\begin{proof}[Proof of Theorem \ref{myscalarthm}] If we ignore for a moment the summands of $L$ that depend nontrivially on the tensor $D$, the statement of Theorem \ref{myscalarthm} strongly resembles that of Theorem \ref{thm01}. Indeed, defining a function $\hat{L}: W_n \to \mathbb{R}$ by 
$$\hat{L}(x_{ij},x_{ijk},x_{ijkl})=L(x_{ij},x_{ijk},x_{ijkl},0,0),$$
we see that, since $\hat{L}(g_{ij},g_{ij,k},g_{ij,kl})=f_{g,0}$ holds in all coordinates on every Lorentz manifold, the function $\hat{L}$ satisfies nearly all the hypotheses of Theorem \ref{thm01}, the only exception being that $\hat{L}$ is allowed to depend quadratically on the second derivatives of the metric. Thus, to show that $f_{g,0}$ has the desired form, we simply need to show that $\frac{\partial^2 \hat{L}}{\partial x_{ijkl}\partial x_{qrst}}=0$.

\hspace{6mm} By \eqref{algconstraint}, we know that
$$\frac{\partial\hat{L}}{\partial x_{ijkl}}(g_{ab},g_{ab,c},g_{ab,cd})=\alpha^{ijkl}(g_{ab})+\beta^{ijklqrs}(g_{ab})g_{qr,s}+\gamma^{ijklqrst}(g_{ab})g_{qr,st},$$
where $\beta^{ijklqrs}(g_{ab})=\frac{\partial^2 \hat{L}}{\partial x_{ijkl}\partial x_{qrs}}(g_{ab},g_{ab,c},g_{ab,cd})$ and $\gamma^{ijklqrst}(g_{ab})=\frac{\partial^2\hat{L}}{\partial x_{ijkl}\partial x_{qrst}}(g_{ab},g_{ab,c},g_{ab,cd}).$
By Lemma \ref{weyllemma}, the terms $\frac{\partial \hat{L}}{\partial x_{ijkl}}(g_{ab},g_{ab,c},g_{ab,cd})$ form the components of a type (4,0) tensor field. An immediate consequence of this tensoriality is the fact that the terms $\frac{\partial \hat{L}}{\partial x_{ijkl}}(g_{ab},g_{ab,c},g_{ab,cd})$ are unchanged by the change of coordinates $(x^i)\to -(x^i)$, from which it follows that $\beta^{ijklqrs}(g_{ab})g_{qr,s}=-\beta^{ijklqrs}(g_{ab})g_{qr,s}=0,$ and, consequently,
$$\frac{\partial \hat{L}}{\partial x_{ijkl}}(g_{ab},g_{ab,c},g_{ab,cd})=\alpha^{ijkl}(g_{ab})+\gamma^{ijklqrst}(g_{ab})g_{qr,st}.$$

\hspace{6mm} Now, let $p$ be a point on an arbitrary Lorentz manifold $(M^n,g)$, and let $(x^i)$ be a Lorentz normal coordinate system centered at $p$. Given $(b^i_{jk})\in (\mathbb{R}^n)^{\otimes 3}$ satisfying $b^i_{jk}=b^i_{kj}$ and $(c^i_{jkl})\in (\mathbb{R}^n)^{\otimes 4}$ satisfying $c^i_{j_{\sigma(1)}j_{\sigma(2)}j_{\sigma(3)}}=c^i_{j_1j_2j_3}$ for all permuations $\sigma \in S_3$, let $(\tilde{x}^i)$ be a smooth coordinate system with $\tilde{x}^i(p)=0$, defined implicitly on a neighborhood of $p$ by 
\begin{equation}\label{gencoordchange}
x^i=\tilde{x}^i+\frac{1}{2}b^i_{jk}\tilde{x}^j\tilde{x}^k+\frac{1}{6}\eta_{ia}c^a_{jkl}\tilde{x}^j\tilde{x}^k\tilde{x}^l.
\end{equation}
(Such a coordinate system always exists, of course, by the inverse function theorem.) Then clearly
$$\frac{\partial x^i}{\partial \tilde{x}^a}(p)=\delta^i_a, \text{  }\frac{\partial^2 x^i}{\partial \tilde{x}^a\partial\tilde{x}^b}(p)=b^i_{ab},\text{ and }\frac{\partial^3x^i}{\partial \tilde{x}^a\partial\tilde{x}^b\partial\tilde{x}^c}(p)=\eta_{ir}c^r_{abc},$$
so we see that
$$\tilde{g}_{ij}(p)=g_{ij}(p)=\eta_{ij},\text{ and }\tilde{g}_{ij,kl}(p)=g_{ij,kl}(p)+(b^a_{ik}b^b_{jl}+b^a_{il}b^b_{jk})\eta_{ab}+c^i_{jkl}+c^j_{ikl},$$
and the tensoriality of $\frac{\partial \hat{L}}{\partial x_{ijkl}}(g_{ab},g_{ab,c},g_{ab,cd})$ implies
\begin{eqnarray*}
0&=&\frac{\partial\hat{L}}{\partial x_{ijkl}}(g_{ab},g_{ab,c},g_{ab,cd})(p)-\frac{\partial\hat{L}}{\partial x_{ijkl}}(\tilde{g}_{ab},\tilde{g}_{ab,c},\tilde{g}_{ab,cd})(p)\\
&=&\gamma^{ijklqrst}(\eta_{ab})(\eta_{ab}b^a_{qs}b^b_{rt}+\eta_{ab}b^a_{qt}b^b_{rs}+c^q_{rst}+c^r_{qst}).
\end{eqnarray*}
Applying the symmetries of the terms $\gamma^{ijklqrst}$ (corresponding to \eqref{mythmsyms}) to the above relation, we obtain 
\begin{equation}\label{gammasyms1}
\gamma^{ijklqrst}(\eta_{ab})(\eta_{ab}b^a_{qs}b^b_{rt}+c^q_{rst})=0.
\end{equation}

\hspace{6mm} Now, fix some indices $0\leq q,r,s,t<n$; take $(b^p_{qr})=0$, $c^q_{rst}=c^q_{srt}=c^q_{rts}=c^q_{tsr}=c^q_{str}=c^q_{trs}=1$, and set $c^i_{jkl}=0$ for all other choices of $0 \leq i,j,k,l<n$. In this case, \eqref{gammasyms1}, together with the other (\eqref{mythmsyms}) symmetries of the terms $\gamma^{ijklqrst}$, yields 
\begin{equation}\label{gammasyms2}
\gamma^{ijklqrst}(\eta_{ab})+\gamma^{ijklqstr}(\eta_{ab})+\gamma^{ijklqtrs}(\eta_{ab})=0.
\end{equation}

\hspace{6mm} Next, fix some $0 \leq u,v<n$, set $(c^i_{jkl})=0$, $b^1_{uv}=b^1_{vu}=1$, and $b^i_{jk}=0$ for all other $i,j,k$. Now \eqref{gammasyms1} gives us
\begin{equation}\label{moregammasyms}
\gamma^{ijkluuvv}(\eta_{ab})+\gamma^{ijkluvvu}(\eta_{ab})+\gamma^{ijklvuuv}(\eta_{ab})+\gamma^{ijklvvuu}(\eta_{ab})=0,
\end{equation}
for all $0 \leq u,v <n$.

\hspace{6mm} Finally, given $0 \leq m,q,r,s,t <n$, setting $b^m_{qr}=b^m_{rq}=b^m_{st}=b^m_{ts}=1$, and letting all other $b^i_{jk}$ and $c^i_{jkl}=0$ in \eqref{gammasyms1}, we obtain
\begin{eqnarray*}
0&=&\gamma^{ijklqqrr}(\eta_{ab})+\gamma^{ijklqrrq}(\eta_{ab})+\gamma^{ijklrqqr}(\eta_{ab})+\gamma^{ijklrrqq}(\eta_{ab})\\
&&+\gamma^{ijklsstt}(\eta_{ab})+\gamma^{ijklstts}(\eta_{ab})+\gamma^{ijkltsst}(\eta_{ab})+\gamma^{ijklttss}(\eta_{ab})\\
&&+\gamma^{ijklqsrt}(\eta_{ab})+\gamma^{ijklqtrs}(\eta_{ab})+\gamma^{ijklrsqt}(\eta_{ab})+\gamma^{ijklrtqs}(\eta_{ab})\\
&&+\gamma^{ijklsqtr}(\eta_{ab})+\gamma^{ijklsrtq}(\eta_{ab})+\gamma^{ijkltqsr}(\eta_{ab})+\gamma^{ijkltrsq}(\eta_{ab}).
\end{eqnarray*}
By \eqref{moregammasyms}, the first two lines of the above relation vanish, and since $\gamma^{ijklqsrt}=\gamma^{ijklsqtr}$, $\gamma^{ijklqtrs}=\gamma^{ijkltqsr}$, and so on, by \eqref{mythmsyms}, the equation above reduces to
\begin{equation}\label{almostgpart}
2(\gamma^{ijklqsrt}(\eta_{ab})+\gamma^{ijklqtrs}(\eta_{ab})+\gamma^{ijklrsqt}(\eta_{ab})+\gamma^{ijklrtqs}(\eta_{ab}))=0.\end{equation}
Applying \eqref{gammasyms2} to the first two and last two summands of \eqref{almostgpart} now yields
$$0=2(-\gamma^{ijklqrst}(\eta_{ab})-\gamma^{ijklrqts}(\eta_{ab}))=-4\gamma^{ijklqrst}(\eta_{ab}),$$
so we see that $\gamma^{ijklqrst}(\eta_{ab})=0$, and by the tensoriality of the terms $\gamma^{ijklqrst}(g_{ab})$, it follows that
$$\frac{\partial^2\hat{L}}{\partial x_{ijkl}\partial x_{qrst}}(g_{ab},g_{ab,c},g_{ab,cd})=\gamma^{ijklqrst}(g_{ab})=0$$
in all coordinate charts on every Lorentz manifold. Thus, we can apply Theorem \ref{thm01} to the function $\hat{L}$, and conclude that
\begin{equation}\label{metricpart}
f_{g,0}=a+bR_g
\end{equation}
for all Lorentz manifolds $(M^n,g)$.

\begin{remark} Note the central role that the assumption $\frac{\partial ^3 \hat{L}}{\partial x_{ijkl}\partial x_{mpqr}\partial x_{tuv}}=0$ (a consequence of \eqref{algconstraint}) plays in our proof of the statement $f_{g,0}=aR_g+b$. By removing this assumption, we would allow the terms $\gamma^{ijklqrst}$ to change under general coordinate transformations of the form \eqref{gencoordchange}, causing us to lose the symmetry \eqref{moregammasyms}, and allowing $R^2, |Riem|^2,$ and other quadratic curvature terms to appear in $f_{g,0}$. 
\end{remark}

\hspace{6mm} Our next goal will be to characterize the terms $\alpha^{Ij}=\frac{\partial L}{\partial y_{I,j}}$. To begin, we'll employ an analog of Lemma \ref{weyllemma} (whose proof is identical to that of Lemma \ref{weyllemma}--if not slightly easier, since we don't have to symmetrize) to conclude that, for every triple $(M^n,g,D)$, the coordinate expressions $\alpha^{Ij}(g_{ij},g_{ij,k},g_{ij,kl},D_I,D_{I,j})$ form the components of a type-$(m+1,0)$ tensor field.

\hspace{6mm} By \eqref{algconstraint}, we know that $\alpha^{Ij}$ must have the form
\begin{eqnarray*}\alpha^{Ij}(g_{ij},g_{ij,k},g_{ij,kl},D_I,D_{I,j})&=&\beta^{Ij}(g_{ab})+\beta^{Ijlmp}(g_{ab})g_{lm,p}+\beta^{Ijlmpq}(g_{ab})g_{lm,pq}\\
&&+\eta^{IjK}(g_{ab})D_K+\eta^{IjKl}(g_{ab})D_{K,l}.
\end{eqnarray*}
We wish to show that, in fact, the only nontrivial terms above are $\beta^{Ij}(g_{ab})$ and $\eta^{IjKl}(g_{ab})D_{K,l}$. 

Let $(M,g,D)$ be a triple consisting of a manifold, a Lorentz metric, and a $(0,m)$-tensor field, and let $\xi$ be an arbitrary coordinate chart on $M$. Set $\tilde{\xi}=-\xi$, so that the tensoriality of $\alpha^{Ij}$ yields
 \begin{eqnarray*}\alpha^{Ij}(\tilde{g}_{ij},\tilde{g}_{ij,k},\tilde{g}_{ij,kl},\tilde{D}_I,\tilde{D}_{I,j})&=&\beta^{Ij}(g_{ab})-\beta^{Ijlmp}(g_{ab})g_{lm,p}+\beta^{Ijlmpq}(g_{ab})g_{lm,pq}\\
&&+(-1)^m\eta^{IjK}(g_{ab})D_K+(-1)^{m+1}\eta^{IjKl}(g_{ab})D_{K,l}\\
=(-1)^{m+1}\alpha^{Ij}(g_{ij},g_{ij,k},g_{ij,kl},D_I,D_{I,j})&=&(-1)^{m+1}(\beta^{Ij}(g_{ab})+\beta^{Ijlmp}(g_{ab})g_{lm,p}+\beta^{Ijlmpq}(g_{ab})g_{lm,pq}\\
&&+\eta^{IjK}(g_{ab})D_K+\eta^{IjKl}(g_{ab})D_{K,l}).
\end{eqnarray*}
If $m$ is even, it follows that
$\beta^{Ij}(g_{ab})+\beta^{Ijlmpq}(g_{ab})g_{lm,pq}+\eta^{IjK}(g_{ab})D_K=0,$
and, consequently
\begin{equation}\label{alphameven}
\alpha^{Ij}(g_{ij},g_{ij,k},g_{ij,kl},D_I,D_{I,j})=\beta^{Ijlmp}(g_{ab})g_{lm,p}+\eta^{IjKl}(g_{ab})D_{K,l}.
\end{equation}
When $D=0$, this gives us $\alpha^{Ij}(g_{ij},g_{ij,k},g_{ij,kl},0,0)=\beta^{Ijlmp}(g_{ab})g_{lm,p}$; hence, the terms $\beta^{Ijlmp}(g_{ab})g_{lm,p}$ form the components of an $(m+1,0)$-tensor for all $(M,g)$, and since we can choose coordinates at every point for which $g_{lm,p}=0$, it follows that $\beta^{Ijlmp}(g_{ab})g_{lm,p}=0$. Thus,
\begin{equation}\label{finalalphameven}
\alpha^{Ij}(g_{ij},g_{ij,k},g_{ij,kl},D_I,D_{I,j})=\eta^{IjKl}(g_{ab})D_{K,l}
\end{equation}
as desired.
If $m$ is odd, we instead obtain $\beta^{Ijlmp}(g_{ab})g_{lm,p}+\eta^{IjK}(g_{ab})D_K=0$, so that
\begin{equation}\label{alphamodd}
\alpha^{Ij}(g_{ij},g_{ij,k},g_{ij,kl},D_I,D_{I,j})=\beta^{Ij}(g_{ab})+\beta^{Ijlmpq}(g_{ab})g_{lm,pq}+\eta^{IjKl}(g_{ab})D_{K,l}.
\end{equation}
Taking $D=0$ in \eqref{alphamodd}, we see that $\beta^{Ij}(g_{ab})+\beta^{Ijlmpq}(g_{ab})g_{lm,pq}=\alpha^{Ij}(g_{ij},g_{ij,k},g_{ij,kl},0,0)$ form the components of an $(m+1,0)$-tensor, and, as a consequence, the terms $\beta^{Ijlmpq}(g_{ab})$ obey the symmetries \eqref{gammasyms1} in the last four indices. Thus, by the same arguments we used to show that $\gamma^{ijklqrst}(g_{ab})=0$, we conclude that $\beta^{Ijlmpq}(g_{ab})=0$, and, consequently,
\begin{equation}\label{alphadesired}
\alpha^{Ij}(g_{ij},g_{ij,k},g_{ij,kl},D_I,D_{I,j})=\beta^{Ij}(g_{ab})+\eta^{IjKl}(g_{ab})D_{K,l},
\end{equation}
as desired.

\hspace{6mm} Now, from the tensoriality of $\beta^{Ij}(g_{ab})$ and $\alpha^{Ij}$, it clearly follows that the terms $\eta^{IjKl}(g_{ab})D_{K,l}$ transform tensorially for all $(M,g,D)$, and it again follows from arguments identical to those in Lemma \ref{weyllemma} that $\eta^{IjKl}(g_{ab})=\frac{\partial\alpha^{Ij}}{\partial y_{Kl}}(g_{ab})$ give the components of a type $(2m+2,0)$-tensor field as well. 

\hspace{6mm} Given a triple $(M^n,g,D),$ a point $p \in M,$ and a coordinate system $(x^i)$ about $p$ for which $x^i(0)=0,$ we again observe that, on some neighborhood of $p$, $\exists$ a coordinate system $(\tilde{x}^i)$ with $\tilde{x}^i(p)=0$ and $x^i=\tilde{x}^i+\frac{1}{2}b^i_{jk}\tilde{x}^j\tilde{x}^k$ (where $(b^i_{jk})\in (\mathbb{R}^n)^{\otimes 3}$ satisfying $b^i_{jk}=b^i_{kj}$ is arbitrary). Under this change of coordinates, we evidently have $\frac{\partial x^i}{\partial \tilde{x}^j}(p)=\delta^i_j$ and $\frac{\partial^2 x^i}{\partial \tilde{x}^j\partial\tilde{x}^k}(p)=b^i_{jk},$ so $\tilde{g}_{ab}(p)=g_{ab}(p),$ 
$$\tilde{D}_{K,l}(p)=D_{K,l}(p)+(b^r_{k_1l}D_{rk_2\cdots k_m}(p)+\cdots+b^r_{k_ml}D_{k_1\cdots k_{m-1}r}(p)),$$
and, by the tensoriality of $\eta^{IjKl}D_{K,l},$ 
\begin{eqnarray*}
\eta^{IjKl}(g_{ab}(p))D_{K,l}(p)&=&\eta^{IjKl}(\tilde{g}_{ab}(p))\tilde{D}_{K,l}(p)\\
&=&\eta^{IjKl}(g_{ab}(p))\left(D_{K,l}(p)+b^r_{k_1l}D_{rk_2\cdots k_m}(p)+\cdots+b^r_{k_ml}D_{k_1\cdots k_{m-1}r}(p)\right).
\end{eqnarray*}
We conclude that, for any choice of $(M^n,g,D)$ and $(b^i_{jk})$ of the given form, in all coordinate charts on $M$, we have
\begin{equation}\label{antisymmcond}
\eta^{IjKl}(g_{ab})(b^r_{k_1l}D_{rk_2\cdots k_m}+\cdots+b^r_{k_ml}D_{k_1\cdots k_{m-1}r})=0.
\end{equation} 

\hspace{6mm} Fix an arbitrary point $p$ in a Lorentz manifold $(M^n,g,D),$ and fix a coordinate system about $p$. Given any $T\in (\mathbb{R}^n)^{\otimes m},$ note that we can choose $D \in \mathscr{T}^0_m(M)$ such that $D_K(p)=T_K$ in the given coordinate system; hence, we can replace $\eqref{antisymmcond}$ with
\begin{equation}\label{antisymmcond'}
\eta^{IjKl}(g_{ab})(b^r_{k_1l}T_{rk_2\cdots k_m}+\cdots +b^r_{k_ml}T_{k_1\cdots k_{m-1}r}),
\end{equation}
where $T \in (\mathbb{R}^n)^{\otimes m}$ is constant. We'll use this to show that $\eta^{IjKl}$ is antisymmetric in the last $m+1$ indices.

\hspace{6mm} Fix some multi-index $Q=q_1\cdots q_m$ ($0 \leq q_i \leq n-1$), and define an element $T \in (\mathbb{R}^n)^{\otimes m}$ by setting $T_Q=1$ and $T_K=0$ for $K\neq Q$. In this case, \eqref{antisymmcond'} gives
\begin{equation}\label{antisymmcond''}
\eta^{Ijk_1q_2\cdots q_ml}(g_{ab})b_{k_1l}^{q_1}+\cdots +\eta^{Ijq_1\cdots q_{m-1}k_ml}(g_{ab})b_{k_ml}^{q_m}=0.
\end{equation}
Fixing $t \in \{0,\ldots,n-1\}$ and setting $b^t_{tt}=1,$ $b^i_{jk}=0$ otherwise in the relation above, we obtain
\begin{equation}\label{bacond1}
\Sigma_{r=1}^m\eta^{Ijq_1\cdots q_{r-1}tq_{r+1}\cdots q_mt}(g_{ab})\delta_{q_rt}=deg_t(Q)\eta^{IjQt}(g_{ab})=0,
\end{equation}
from which it follows that $\eta^{IjQt}=0$ whenever $t$ occurs in $Q$. 

Next, fix some distinct $t,s \in \{0,\ldots,n-1\}$, and set $b^t_{ts}=b^t_{st}=1,$ $b^i_{jk}=0$ otherwise, so that \eqref{antisymmcond''} gives
\begin{equation}\label{bacond2}
\Sigma_{r=1}^m(\eta^{IjQs}(g_{ab})+\eta^{Ijq_1\cdots q_{r-1}sq_{r+1}\cdots q_mt}(g_{ab}))\delta_{q_rt}=0.
\end{equation}
Now, if $deg_t(Q)>1,$ then $t$ occurs in $q_1\cdots q_{r-1}sq_{r+1}\cdots q_m$ for every $r$, so by \eqref{bacond1}, $\eta^{Ijq_1\cdots q_{r-1}sq_{r+1}\cdots q_mt}=0,$ and it follows from \eqref{bacond2} that $\eta^{IjQs}=0.$ If $deg_t(Q)=1,$ with $q_r=t,$ then we simply obtain $\eta^{IjQs}=-\eta^{Ijq_1\cdots q_{r-1}sq_{r+1}\cdots q_mq_r}.$ Putting all this together, we've now shown that, for any multi-index $IjKl$ and $1\leq r \leq m,$ 
$$\eta^{Ijk_1\cdots k_{r-1}lk_{r+1}\cdots k_mk_r}=-\eta^{IjKl},$$
from which it follows that $\eta^{IjKl}$ is fully antisymmetric in its last $m+1$ indices (and, consequently--since $\eta^{IjKl}:=\frac{\partial^2L}{\partial y_{I,j}\partial y_{K,l}}$--in its first $m+1$ indices as well).

Now we wish to show that $\eta^{IjKl}(g_{ab})D_{I,j}D_{K,l}=c|d\gamma|_g^2,$ where $|\cdot|_g$ is the usual norm on $(m+1)$-tensors and $\gamma=Alt(D)$ is the fully antisymmetric part of $D$. By the antisymmetries of the terms $\eta^{IjKl},$ clearly we can assume $m+1 \leq n,$ and we see that 
$$\eta^{IjKl}(g_{ab})D_{I,j}D_{K,l}=c_m\Sigma_{i_1<\cdots<i_m<j}\Sigma_{k_1<\cdots<k_m<l}\eta^{IjKl}(g_{ab})(d\gamma)_{Ij}(d\gamma)_{Kl}$$
for some constant $c_m$ depending only on our convention for the definition of $Alt$, since $(d\gamma)_{i_1\cdots i_{m+1}}=c_m'\Sigma_{\sigma \in S_{m+1}}sgn(\sigma)\partial_{i_{\sigma(1)}}D_{i_{\sigma(2)}\cdots i_{\sigma(m)}}$. By the tensoriality of $\eta^{IjKl}(g_{ab})$, we know that $\eta^{IjKl}(\eta_{ab})$ is invariant under the action of the Lorentz group on $(2(m+1),0)$-tensors. Thus, by the same Lorentz-invariance arguments we used in the proof of Theorem \ref{thm01}, we have $\eta^{IjKl}(\eta_{ab})=0$ if $deg_t(IjKl)$ is odd for any $t$, so, combining this with the antisymmetries of $\eta^{IjKl}$, we see that
\begin{equation}\label{etalorentzinvar}
\eta^{IjKl}(g_{ab}(p))D_{I,j}(p)D_{K,l}(p)=c_m\Sigma_{i_1<\cdots<i_m<j}\eta^{IjIj}(\eta_{ab})(d\gamma)^2_{Ij}(p)
\end{equation}
for a coordinate system about $p$ satisfying $g_{ab}(p)=\eta_{ab}$.
By the Lorentz invariance and antisymmetries of $\eta^{IjIj}(\eta_{ab})$, it's also easy to see, as before, that $\eta^{IjIj}(\eta_{ab})=\eta^{I'j'I'j'}(\eta_{ab})$ when $I'j'=\sigma(i_1)\cdots \sigma(i_m)\sigma(j)$ for some permutation $\sigma$ of $\{0,\ldots,n-1\}$ fixing $0$, and $\eta^{I0I0}=(-1)^{deg_{\sigma(0)}I+1}\eta^{I'j'I'j'}$ when $I'j'=\sigma(i_1)\cdots \sigma(i_m)\sigma(0)$ for a permutation $\sigma$ that doesn't fix $0$. Hence, we indeed have
$$\eta^{IjKl}(g_{ab})D_{I,j}D_{K,l}=-c_m\eta^{012\cdots (m+1)012\cdots (m+1)}(\eta_{ab})\Sigma_{i_1<\cdots<i_m<j}\eta_{i_1i_1}(d\gamma)_{Ij}^2=c|d\gamma|_g^2,$$
as desired.

\hspace{6mm} We've now shown that
$$f_{g,D}=a+bR_g+c|d\gamma|_g^2+\beta^{Ij}(g_{ab})D_{I,j}+\zeta^I(g_{ij},g_{ij,k},g_{ij,kl},D_J)D_I,$$
where $\zeta^I=\frac{\partial L}{\partial y_I}.$ Since $f_{g,D}-(a+bR_g+c|d\gamma|^2)$ is another invariant function, it follows again from the same arguments we used in Lemma \ref{weyllemma} that $\beta^{Ij}(g_{ab})$ is tensorial, so $Bd_g(D)=:\beta^{Ij}(g_{ab})D_{I;j}$ gives an invariant divergence term. By considering normal coordinate systems, we conclude that
$$f_{g,D}=a+bR_g+c|d\gamma|_g^2+Bd_g(D)+\mu^I(g_{ij},g_{ij,kl},D_J)D_I,$$
where $\mu^I=\zeta^I(\cdot, 0,\cdot,\cdot)$. 

\hspace{6mm} It follows that $\mu^I(g_{ij},g_{ij,kl},D_J)$ defines a $(m,0)$-tensor field, and by (\ref{algconstraint}), we know that $\mu^I$ has the form 
$$\mu^I(g_{ij},g_{ij,kl},D_J)=\lambda^I(g_{ab})+\lambda^{Ilmpq}(g_{ab})g_{lm,pq}+\lambda^{IJ}(g_{ab})D_J.$$
Setting $D=0$, we see that $\lambda^I(g_{ab})+\lambda^{Ilmpq}(g_{ab})g_{lm,pq}$ defines a tensor field in its own right, and we can apply the same arguments we used to show that $\frac{\partial^2\hat{L}}{\partial x_{ijkl}\partial x_{qrst}}=0$ to conclude that $\lambda^{Ilmpq}=0.$ Finally, since $\lambda^I(g_{ab})$ and $\lambda^{IJ}(g_{ab})D_J$ determine $(m,0)$-tensor fields, setting $T_g(D)=\lambda^I(g_{ab})D_I$ and $Q_g(D)=\lambda^{IJ}(g_{ab})D_ID_J,$ we arrive at the desired form:
$$f_{g,D}=a+bR_g+c|d\gamma|_g^2+Bd_g(D)+Q_g(D)+T_g(D).$$
\end{proof}

Since the divergence term $Bd_g(D)$ has no effect on the variational principle arising from the Lagrangians in question, it follows that, for all such principles, the $m$-form $\gamma$ is the only part of $D$ whose dynamics are controlled by the Euler-Lagrange equations--a curiosity worth examining from a physical perspective. We suspect that similar results will hold under a variety of slightly weaker algebraic restrictions on the Lagrangian $L$.
\medskip

\end{document}